\documentclass[12pt,reqno]{amsart}

\usepackage{amsthm, amsmath, amscd, amsfonts}
\usepackage{cases}
\usepackage{nameref, hyperref, cleveref}
\usepackage{fancyhdr}
\usepackage{array}
\usepackage[normalem]{ulem}
\usepackage{caption}
\usepackage{subcaption}
\usepackage{graphicx}
\usepackage[usenames,dvipsnames]{xcolor}
\usepackage{tikz}
\usepackage[color=WildStrawberry]{todonotes}
\usepackage{geometry}
\usepackage[parfill]{parskip} 
\parskip=12pt
\usepackage{multicol}
\usepackage{mathabx}

\usetikzlibrary{shapes}


\newcommand{\Z}{\mathbb{Z}}

\newcommand{\lcm}{\textup{lcm}}

\renewcommand{\a}{\alpha}
\newcommand{\G}{\mathcal{G}}
\newcommand{\K}{\mathcal{K}}
\newcommand{\h}{\mathcal{H}}

\newcommand{\gthree}{\ell_i\cdot \ell_{n}[\ell_{n}^{-1}]_{\text{mod $\ell_{n-1}$}}}
\newcommand{\Newspline}{Triangulation}
\newcommand{\newspline}{triangulation}
\newtheorem{theorem}{Theorem}[section]
\newtheorem{corollary}[theorem]{Corollary}

\newtheorem{proposition}[theorem]{Proposition}

\newtheorem{definition}[theorem]{Definition}
\newtheorem{remark}[theorem]{Remark}

\newtheorem{question}[theorem]{Question}

\definecolor{mygray}{cmyk}{0.1875,0,0.375,0.8745}
\definecolor{myraspberry}{cmyk}{0,1,0.15,0.3}
\definecolor{mysunshine}{cmyk}{0,0.104,0.542,0.0157}
\definecolor{mypurple}{cmyk}{0.324,0.743,0,0.488}

	\tikzstyle{rectanglevertex}=
	[rounded corners=3pt,
	line width=1pt,
	draw=mygray,
	fill=black!5, 
	font=\fontsize{11}{11}]
	\tikzstyle{vertex}=[rounded corners=3,line width=1pt, draw=black!50,fill=white, font=\fontsize{12}{12}\boldmath\rmfamily\bfseries]
	\tikzstyle{edge}=[line width=1pt,]
	\tikzstyle{edgelabel}=[font=\fontsize{12}{12}\boldmath\rmfamily\bfseries,color=myraspberry]
	\tikzstyle{newedgelabel}=[font=\fontsize{8}{12}\boldmath\rmfamily\bfseries,color=myraspberry]
	\tikzstyle{dashedarrow}=[dashed, line width=4pt, ->]

\tikzstyle{dashededge}=[line width=1.5pt,dotted]

\title[Bases and Structure Constants of Generalized Splines]{Bases and Structure constants of Generalized Splines with Integer Coefficients on Cycles}
\author[Bowden, Hagen, King, and Reinders]{Nealy Bowden, Sarah Hagen, Melanie King, and Stephanie Reinders }
\thanks{ We are extremely grateful to Julianna Tymoczko, Elizabeth Drellich, and Yue Cao for their insight and contributions to this paper. We would also like to thank Ruth Haas and Joshua Bowman for valuable discussions on these topics, and Michael DiPasquale for his thorough review and comments. This work was supported by Smith College and the National Science Foundation through the Center for Women in Mathematics [DMS-1143716]. }


\begin{document}

\maketitle

\begin{abstract}

An {\it integer generalized spline} is a set of vertex labels on an edge-labeled graph that satisfy the condition that if two vertices are joined by an edge, the vertex labels are congruent modulo the edge label. Foundational work on these objects comes from Gilbert, Polster, and Tymoczko, who generalize ideas from geometry/topology (equivariant cohomology rings) and algebra (algebraic splines) to develop the notion of {\it generalized splines}. Gilbert, Polster, and Tymoczko prove that the ring of splines on a graph can be decomposed in terms of splines on its subgraphs (in particular, on trees and cycles), and then fully analyze splines on trees. Following Handschy-Melnick-Reinders and Rose, we analyze splines on cycles, in our case integer generalized splines. 

The primary goal of this paper is to establish two new bases for the module of integer generalized splines on cycles: the triangulation basis and the King basis. Unlike bases in previous work, we are able to characterize each basis element completely in terms of the edge labels of the underlying cycle. As an application we explicitly construct the multiplication table for the ring of integer generalized splines in terms of the King basis.

\end{abstract}


\section{Introduction}

An {\it integer generalized spline} is a set of vertex labels on an edge-labeled graph that satisfy the condition that if two vertices are joined by an edge, the vertex labels are congruent modulo the edge label. (See Definition \ref{defSplines} for a precise statement.) Figure \ref{Three-cycle} shows examples of splines on a three-cycle. 
\begin{figure}
\begin{tikzpicture}
	
	\begin{scope}
		\pgfmathsetmacro{\r}{1.25}

		\draw[edge] (-90:\r)--(0:\r);
		\draw[edge] (0:\r)--(90:\r);
		\draw[edge] (90:\r)--(-90:\r);		
	
		\node[edgelabel] at (-45:\r) {$2$};
		\node[edgelabel] at (45:\r) {$5$};
		\node[edgelabel] at (180:\r /4) {$3$};
		
		\node[vertex] at (-90:\r) {$1$};
		\node[vertex] at (0:\r) {$1$};
		\node[vertex] at (90:\r) {$1$};
	\end{scope}

	\begin{scope}[xshift=1.5 in]
		\pgfmathsetmacro{\r}{1.25}

		\draw[edge] (-90:\r)--(0:\r);
		\draw[edge] (0:\r)--(90:\r);
		\draw[edge] (90:\r)--(-90:\r);		
	
		\node[edgelabel] at (-45:\r) {$2$};
		\node[edgelabel] at (45:\r) {$5$};
		\node[edgelabel] at (180:\r /4) {$3$};
		
		\node[vertex] at (-90:\r) {$0$};
		\node[vertex] at (0:\r) {$2$};
		\node[vertex] at (90:\r) {$12$};
	\end{scope}

	\begin{scope}[xshift=3 in]
		\pgfmathsetmacro{\r}{1.25}

		\draw[edge] (-90:\r)--(0:\r);
		\draw[edge] (0:\r)--(90:\r);
		\draw[edge] (90:\r)--(-90:\r);		
	
		\node[edgelabel] at (-45:\r) {$2$};
		\node[edgelabel] at (45:\r) {$5$};
		\node[edgelabel] at (180:\r /4) {$3$};
		
		\node[vertex] at (-90:\r) {$0$};
		\node[vertex] at (0:\r) {$0$};
		\node[vertex] at (90:\r) {$15$};
	\end{scope}

\end{tikzpicture}
\caption{The edge labels are $\{2,5,3\}$ and the sets of vertex labels $\{1,1,1\}$, $\{0,2,12\}$, and $\{0,0,15\}$ each form a spline on the cycle.}
\label{Three-cycle}
\end{figure}
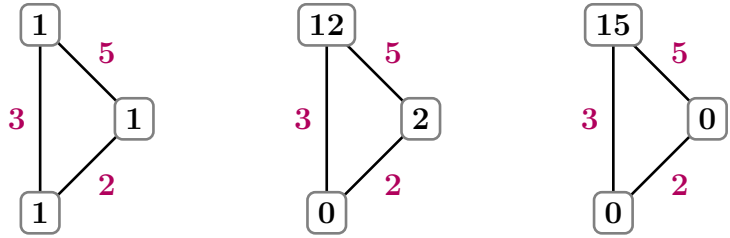

The term ``spline'' comes from the name of the thin strips of wood used by engineers to model larger constructions like ships or cars. Mathematicians later adopted the term to refer to piecewise polynomials on polytopes with the property that the polynomials on the faces agree at their shared edges up to a given degree of smoothness. These mathematical splines are also used for object-modeling purposes, hence the use of the name.

Billera pioneered the algebraic study of splines, especially looking into questions regarding the dimension of the module of splines \cite{Billera88}. Many people continued Billera's work, including among others, Rose \cite{Rose95,Rose04} and Haas \cite{Haas91} who worked on identifying dimension and bases for the module of splines. 

Spline theory developed independently in topology and geometry. Goresky, Kottwitz, and MacPherson \cite{GKM}, Payne \cite{Payne06}, and Bahri, Franz, and Ray \cite{Bahri09} constructed equivariant cohomology rings using splines, although they did not use that name. 

Gilbert, Polster, and Tymoczko generalize the notion of splines that we use here to what they call \emph{generalized splines} \cite{Gilbert} . These generalized splines are built on the dual graph of the polytopes found in classical splines. The work of Billera and Rose shows that the two constructions (on polytopes or their duals) are equivalent in most cases, including the cases of classical interest \cite{Billera91-2}.

Cycles turn out to be a particularly important family of graphs to study. Indeed Gilbert, Polster, and Tymoczko show that the ring of generalized splines on a graph $G$ can be decomposed in terms of splines on certain trees and cycles in $G$ \cite{Gilbert}. They completely describe splines on trees, while leaving open the investigation of splines on cycles. Similarly, Rose showed that cycles play a key role in the relations defining modules of splines \cite{Rose04}. 

Handschy, Melnick, and Reinders begin analysis of integer generalized splines on cycles \cite{HMR}. They prove the existence of a certain flow-up basis (see Definition \ref{flowupdef}), what we call the smallest-value basis, for splines on cycles, and thus prove that such spline modules are free. They define their basis for arbitrary cycles, but only have formulas for the leading nonzero elements. 

 In this paper we introduce two new bases for the module of integer generalized splines on cycles: the triangulation basis and the King basis. Each of these bases is fully expressible in terms of the edge labels of the cycle, and each has its own strengths. The triangulation basis, so called because it is constructed from triangulated cycles, is useful because it exists on arbitrary cycles (Theorem \ref{triangbasis}). The advantage of the King basis lies in the fact that it is relatively simple to calculate, with the entries almost constant (Definition \ref{KingSplines}). Although the King basis only exists on cycles with a pair of relatively prime adjacent edge labels, this restriction is not uncommon in applications. In fact an even greater restriction that all edge labels be relatively prime is commonly used \cite{Goldin09,Knutson}. The results of our work naturally generalize to principle ideal domains, which include classical univariate splines and Pr\"{u}fer domains; see forthcoming work \cite{HT}.
 
 As an application we present the multiplication table of splines on cycles where the products of splines are expressed in terms of the King basis. Finding multiplication tables of equivariant cohomology rings in terms of Schubert bases is the central problem of Schubert calculus. We view this work as a step in that geometric direction.
 
 The rest of this paper is organized as follows. In Section \ref{prelim} we summarize the important definitions and theorems that we use in our work. In Section \ref{bcsec} we provide a criterion for the existence of flow-up bases. Sections \ref{triangsec} and \ref{kingsec} are dedicated to proving the existence of the triangulation basis and King basis respectively. In the final section we give the multiplication table for the King basis and end with an open question.


\section{Preliminaries}\label{prelim}

\subsection{Results from Handschy, Melnick, and Reinders}
Handschy, Melnick, and Reinders proved a number of results about splines on cycles \cite{HMR}. Many of their propositions and theorems play key roles in our proofs regarding \newspline \ splines and King splines. We also use their notation, which we describe in this section.

\subsubsection{Basic Definitions}
The foundational combinatorial object we study is an edge-labeled graph, defined here:
\begin{definition}[Edge-Labeled Graphs]
\label{defSplines}
	Let $G$ be a graph with $k$ edges ordered $e_1, e_2, \dots, e_k$ and $n$ vertices ordered $v_1,...,v_n$.   Let $\ell_i$ be a positive integer label on edge $e_i$ and let $L=\{\ell_1,...,\ell_k\}$ be the set of edge labels. Then $(G,L)$ is an edge-labeled graph.  
\end{definition}

With this notation for edge-labeled graphs we have the formal definition of splines:
\begin{definition}[Splines]
	A spline on the edge-labeled graph $(G,L)$ is a vertex-labeling as follows: if two vertices are connected by an edge $e_i$ then the two vertex labels are equivalent modulo $\ell_i$. We denote a spline $\G=(g_1,...,g_n)$ where $g_i$ is the label on vertex $v_i$ for $1 \leq i \leq n$.
\end{definition}

In this paper we assume the labels $g_i \in \Z$.

\subsubsection{Flow-Up Classes and the Smallest-Value Basis}

Flow-up classes are a particularly nice class of splines on cycles. They arise geometrically (\cite{Goldin09}, \cite{Knutson}, \cite{Tym05}) and are an analogue of upper triangular matrices. 

\begin{definition}[Flow-Up Classes]\label{flowupdef}
	 Fix a cycle with edge labels $(C_n,L)$ and fix $k$ with $1\leq k<n$. A flow-up class $\G_k$ on $(C_n,L)$ is a spline with $k$ leading zeros. 
\end{definition}

We say that a basis whose elements are flow-up classes is a \emph{flow-up basis}. The simplest flow-up class is the trivial spline; It exists on any edge-labeled cycle.

\begin{proposition}[Trivial Splines {\cite[Prop 2.5]{HMR}}] 
\label{TrivialSplines}
	Fix a cycle with edge labels $(C_n,L)$. The smallest flow-up class  on $(C_n,L)$ is $\G_0=(1,...,1)$. Moreover, any multiple of $\G_0$  is also a spline. We call the multiples of $\G_0$ trivial splines.
\end{proposition}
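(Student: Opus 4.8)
The statement to prove is Proposition~\ref{TrivialSplines}: on any edge-labeled cycle $(C_n,L)$, the constant labeling $\G_0=(1,\dots,1)$ is a spline, it is the smallest flow-up class, and every integer multiple of it is also a spline.

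The plan is to verify the three claims in order, each by a direct appeal to the definition of a spline. First, to see that $\G_0=(1,\dots,1)$ is a spline: pick any edge $e_i$ of $C_n$ joining vertices $v_a$ and $v_b$. The vertex labels at these endpoints are both $1$, so their difference is $0$, which is divisible by the positive integer $\ell_i$. Since this holds for every edge, $\G_0$ satisfies the congruence condition everywhere and is therefore a spline. Second, to see that $\G_0$ is the \emph{smallest} flow-up class: a flow-up class $\G_k$ with $k\geq 1$ has at least one leading zero by Definition~\ref{flowupdef}, whereas $\G_0$ has no leading zeros (its first entry is $1$); so among flow-up classes, the one with the fewest leading zeros is $\G_0$, and it is the natural base case of any flow-up basis. (If ``smallest'' is instead meant in the sense of componentwise minimal positive constant spline, the argument is that any spline all of whose entries equal a fixed $c$ works for every $c$, and $c=1$ is the least positive such value; I would phrase the proof to match whichever reading the surrounding text intends, but both are one-line observations.) Third, for any $c\in\Z$ the labeling $c\G_0=(c,\dots,c)$ has all endpoint differences equal to $0$ along every edge, hence is again a spline; these are the trivial splines.

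I would write this as a short paragraph: for an arbitrary edge $e_i=(v_a,v_b)$ of the cycle, compute $g_a-g_b = 1-1 = 0 \equiv 0 \pmod{\ell_i}$, note this is independent of the choice of edge, conclude $\G_0$ is a spline; then observe the analogous computation $c-c=0$ for multiples; then remark that $\G_0$ has zero leading zeros so it is the minimal flow-up class. There is essentially no obstacle here — the only thing to be slightly careful about is stating precisely in what sense $\G_0$ is ``the smallest,'' i.e.\ clarifying that it is minimal among flow-up classes because it has the fewest (namely zero) leading zeros, and that it exists on \emph{every} edge-labeled cycle regardless of the labels $L$, which is what makes it the universal base case for the flow-up bases constructed later in the paper.
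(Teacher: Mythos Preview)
The paper does not supply its own proof of this proposition: it is quoted in the Preliminaries section as a result from \cite{HMR} and stated without argument. Your direct verification that $(1,\dots,1)$ and all its scalar multiples are splines---because every edge condition reduces to $0\equiv 0\pmod{\ell_i}$---is correct and is exactly the kind of one-line check the cited source gives.

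The only point to tighten is your reading of ``smallest.'' Your first interpretation (fewest leading zeros) is not what the paper means. In the paper's terminology, the \emph{smallest flow-up class} $\G_k$ is the componentwise minimal one among flow-up classes with $k$ leading zeros and positive remaining entries (see the Definition of Smallest Flow-Up Class immediately following the proposition). For $k=0$ the paper simply declares $\G_0=(1,\dots,1)$ to be this object ``by convention,'' though your second reading actually justifies it: any spline with all entries positive integers has every entry at least $1$, and $(1,\dots,1)$ attains this, so it is componentwise minimal. Drop the ``fewest leading zeros'' interpretation and keep the second one, and your proof is complete.
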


The following theorem establishes that flow-up classes exist on any edge-labeled cycle. 

\begin{theorem}[Flow-Up Classes on $n$-cycles {\cite[Thrm 4.3]{HMR}}]
\label{FlowUpClassesExist}
	Fix a cycle with edge labels $(C_n,L)$. Let $n\geq 3$ and $1 \leq k< n$. There exists a flow-up class $\G_k$ on $(C_n,L)$.
\end{theorem}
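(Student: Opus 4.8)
The plan is to construct a flow-up class $\G_k$ on $(C_n,L)$ explicitly, reducing the problem to finding a single integer that simultaneously satisfies two congruences coming from the two edges incident to vertex $v_k$ (or, in the indexing of the excerpt, the vertex where the block of leading zeros ends and the nonzero tail begins). Write the cycle with vertices $v_1,\dots,v_n$ in cyclic order and edge labels $\ell_1,\dots,\ell_n$, where $\ell_i$ joins $v_i$ to $v_{i+1}$ (indices mod $n$). A flow-up class $\G_k$ has $g_1=\cdots=g_k=0$, so the spline conditions on the edges joining two zero-labeled vertices are automatic. The only constraints are: (i) on the edge from $v_k$ to $v_{k+1}$, we need $g_{k+1}\equiv 0 \pmod{\ell_k}$; (ii) on the edge from $v_n$ to $v_1$, we need $g_n\equiv 0 \pmod{\ell_n}$; and (iii) on each edge between consecutive nonzero vertices, $g_{i}\equiv g_{i+1}\pmod{\ell_i}$.

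The cleanest approach is to try the ansatz where all the nonzero vertices carry the \emph{same} label $g$; that is, $g_{k+1}=g_{k+2}=\cdots=g_n=g$. Then conditions of type (iii) among the nonzero vertices are automatically satisfied since $g\equiv g$. What remains is a single integer $g$ with $g\equiv 0 \pmod{\ell_k}$ and $g\equiv 0\pmod{\ell_n}$, i.e. $g$ a common multiple of $\ell_k$ and $\ell_n$. Taking $g=\lcm(\ell_k,\ell_n)$ (or just $g=\ell_k\ell_n$) produces a valid spline, and it is a flow-up class with exactly $k$ leading zeros provided $g\neq 0$, which holds since the edge labels are positive integers. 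This handles the case $k\geq 1$ uniformly; the case $k=0$ is covered by Proposition \ref{TrivialSplines}, or equivalently by the same construction with the convention that no vertices are zero.

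One subtlety to check is the boundary/wraparound behavior: when $k+1 = n$ (only one nonzero vertex), the vertex $v_n$ is incident to edges $e_{n-1}=e_k$ and $e_n$, so we again just need $g_n$ divisible by $\lcm(\ell_k,\ell_n)$ — consistent with the general recipe. Another point worth stating carefully is that "flow-up class with $k$ leading zeros" in Definition \ref{flowupdef} should be read as \emph{at least} $k$ leading zeros for the existence claim, or, if exactly $k$ is intended, one notes that $g=\lcm(\ell_k,\ell_n)>0$ guarantees $g_{k+1}\neq 0$. I do not anticipate a genuine obstacle here: unlike the construction of a full \emph{basis} of flow-up classes (which requires the leading entries to divide appropriately and is the substance of later sections), producing a \emph{single} flow-up class for each $k$ is elementary once one observes that constant nonzero tails decouple all the interior edge conditions. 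The only thing to be careful about is bookkeeping the edge indices around the cycle so that the two "active" congruences are correctly identified as those from $e_k$ and $e_n$.
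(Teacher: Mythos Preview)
Your argument is correct: setting $g_{k+1}=\cdots=g_n=\lcm(\ell_k,\ell_n)$ (or $\ell_k\ell_n$) collapses all interior edge conditions and leaves only the two boundary congruences at $e_k$ and $e_n$, both of which are satisfied by construction. The edge-indexing and the $k=n-1$ boundary case are handled correctly.

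Note, however, that this theorem is not proved in the present paper---it is quoted from \cite{HMR} as background (see the attribution in the theorem header). So there is no in-paper proof to compare against directly. That said, the paper does implicitly re-establish existence of flow-up classes via the triangulation splines $\h_k$ in Theorem~\ref{triangbasis}: there the construction is more elaborate, triangulating the cycle with auxiliary edges labeled $\gcd(\ell_i,\dots,\ell_n)$ and solving a chain of two-congruence systems via Proposition~\ref{modprop}, precisely in order to force the leading nonzero entry to equal the minimal value $\lcm(\ell_k,\gcd(\ell_{k+1},\dots,\ell_n))$ from Theorem~\ref{SmallestElemn}. Your constant-tail ansatz is the quickest route to \emph{some} flow-up class and suffices for the bare existence statement; the paper's triangulation construction is aimed at producing a flow-up class with the optimal leading entry, which is what is needed for the basis results in Sections~\ref{triangsec} and~\ref{kingsec}.
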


The next definition introduces smallest flow-up classes. 

\begin{definition}[Smallest Flow-Up Class]
	Fix a cycle with edge labels $(C_n,L)$. The smallest flow-up class $\G_k=(0,...,0,g_{k+1},...,g_n)$ on $(C_n,L)$ is the flow-up class whose nonzero entries are positive and if $\G_k'=(0,...,0,g_{k+1}',...,g_n')$ is another flow-up class with positive entries then $g_i' \geq g_i$ for all entries. By convention we consider \\ $\G_0=(1,...,1)$ the smallest flow-up class $\G_0$.
\end{definition}

The following theorem gives an explicit formula for the smallest leading element of flow-up classes.

\begin{theorem}[Smallest Leading Element of $\G_k$ {\cite[Thrm 4.5]{HMR}}] 
\label{SmallestElemn}
Fix a cycle with edge labels $(C_n,L)$. Fix $n\geq 3$ and $k$ such that $2 \leq k <n$. Let  $\G_{k-1}=(0,...,0,g_k...,g_n)$ be a flow-up class on $(C_n,L)$. The leading element $g_k$ is a multiple of $\lcm(\ell_{k-1},\gcd(\ell_k,...,\ell_n))$ and there is a flow-up class $\G_{k-1}$ with  $g_k=\lcm(\ell_{k-1},\gcd(\ell_k,...,\ell_n))$.
\end{theorem}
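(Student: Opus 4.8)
The plan is to prove the two assertions separately: first the divisibility constraint (every leading element $g_k$ of a flow-up class $\G_{k-1}$ is a multiple of $\lcm(\ell_{k-1},\gcd(\ell_k,\dots,\ell_n))$), and then the existence of a flow-up class achieving exactly this value. For the divisibility, fix a flow-up class $\G_{k-1}=(0,\dots,0,g_k,\dots,g_n)$ on $(C_n,L)$, where I label the vertices $v_1,\dots,v_n$ around the cycle and the edge $e_i$ joins $v_i$ to $v_{i+1}$ (indices mod $n$), with $e_n$ joining $v_n$ to $v_1$. The spline conditions along the edge $e_{k-1}$ joining the last zero vertex $v_{k-1}$ to $v_k$ give $g_k \equiv 0 \pmod{\ell_{k-1}}$, so $\ell_{k-1}\mid g_k$. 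For the other factor, I would argue that $\gcd(\ell_k,\dots,\ell_n)$ also divides $g_k$: chaining the congruences $g_k\equiv g_{k+1}\equiv\cdots\equiv g_n\pmod{\gcd(\ell_k,\dots,\ell_{n-1})}$ along edges $e_k,\dots,e_{n-1}$, together with $g_n\equiv g_1 = 0 \pmod{\ell_n}$, forces $g_k \equiv 0 \pmod{\gcd(\ell_k,\dots,\ell_n)}$. Since $g_k$ is divisible by both $\ell_{k-1}$ and $\gcd(\ell_k,\dots,\ell_n)$, it is divisible by their least common multiple, which is the claim.

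For the existence half, I would construct an explicit flow-up class with leading entry $g_k = \lcm(\ell_{k-1},\gcd(\ell_k,\dots,\ell_n))$. The natural candidate is to set $g_i = 0$ for $i < k$ and $g_i = g_k$ (the same constant value) for all $i \ge k$. One then checks the spline conditions edge by edge: along $e_i$ for $k \le i \le n-1$ the two endpoint labels are equal, so congruence is automatic; along $e_{k-1}$ (from $v_{k-1}$ to $v_k$) we need $\ell_{k-1}\mid g_k$, which holds by choice of $g_k$ as a multiple of $\ell_{k-1}$; along $e_n$ (from $v_n$ to $v_1$) we need $\ell_n \mid g_k$, and indeed $\gcd(\ell_k,\dots,\ell_n)\mid \ell_n$ and $\gcd(\ell_k,\dots,\ell_n)\mid g_k$, so this is fine — wait, we actually need $\ell_n \mid g_k$, not merely $\gcd(\ldots)\mid g_k$. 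So the constant spline only works when $\ell_n\mid g_k$; in general I would instead take $g_i = g_k$ for $k \le i \le n$ but may need to perturb the later entries. The robust construction is: let $d = \gcd(\ell_k,\dots,\ell_n)$, and use Theorem \ref{FlowUpClassesExist} or a direct argument to produce a flow-up class on the $(n-k+1)$-path-plus-closing-edge with all entries equal to a common multiple of $d$ that also closes up modulo $\ell_n$; since $d$ is an integer combination realized as a spline value, one can scale so the leading value is exactly $\lcm(\ell_{k-1},d)$. Concretely, I would exhibit the flow-up class whose entries from position $k$ onward are all equal to $\lcm(\ell_{k-1},d)$ and verify it is a spline using that $d \mid \ell_i$ for every $i \in \{k,\dots,n\}$, so each congruence $g_i \equiv g_{i+1} \pmod{\ell_i}$ reads $0 \equiv 0$, and the wrap-around edge $e_n$ needs $\ell_n \mid \lcm(\ell_{k-1}, d)$; since $d \mid \ell_n$ this requires care, so if it fails I fall back on combining with a trivial spline (Proposition \ref{TrivialSplines}) or invoking Theorem \ref{FlowUpClassesExist} to adjust the non-leading entries while keeping $g_k$ fixed at $\lcm(\ell_{k-1},d)$ — the existence theorem guarantees a flow-up class $\G_{k-1}$ exists, and the divisibility half guarantees its leading entry is a multiple of $\lcm(\ell_{k-1},d)$, so by the lower bound the minimal achievable leading entry divides any such, and a short argument shows $\lcm(\ell_{k-1},d)$ itself is attainable.

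I expect the main obstacle to be precisely this wrap-around edge $e_n$ in the existence construction: the "all entries constant" candidate fails the congruence across $e_n$ unless $\ell_n$ (not just $\gcd(\ell_k,\dots,\ell_n)$) divides the leading value, so the construction must allow the entries $g_{k+1},\dots,g_n$ to differ from $g_k$ in a controlled way while preserving all congruences and not disturbing the leading term. Handling this likely requires either an explicit inductive construction down the path from $v_n$ back to $v_k$ — choosing $g_n$ a multiple of $\ell_n$, then $g_{n-1}$ congruent to $g_n$ mod $\ell_{n-1}$, and so on, arriving at a prescribed $g_k$ — combined with a Chinese-Remainder-type argument to make $g_k$ equal $\lcm(\ell_{k-1},d)$, or a direct appeal to Theorem \ref{FlowUpClassesExist} together with the divisibility bound just established. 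The divisibility half, by contrast, is a routine chase through the spline congruences and should be quick.
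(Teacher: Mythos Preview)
Your divisibility argument is correct and standard: the edge $e_{k-1}$ forces $\ell_{k-1}\mid g_k$, and reducing the chain of congruences along $e_k,\dots,e_n$ modulo $d=\gcd(\ell_k,\dots,\ell_n)$ forces $d\mid g_k$.

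The existence half, however, has a genuine gap that you correctly locate but do not close. The constant-value candidate fails at the wrap-around edge $e_n$, as you note. Your first proposed fix --- appealing to Theorem~\ref{FlowUpClassesExist} together with the divisibility bound --- is circular: it only tells you some flow-up class exists with leading entry a \emph{multiple} of $\lcm(\ell_{k-1},d)$, not that $\lcm(\ell_{k-1},d)$ itself is attained; the ``short argument'' you allude to is exactly the thing to be proved. Your second proposed fix --- an inductive construction along the path with CRT at each step --- is the right idea, but you neither carry it out nor verify that the solvability condition for CRT holds at every step and that the process lands on the prescribed leading value.

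This theorem is quoted from \cite{HMR} and is not proved in the present paper, so there is no proof here to compare against directly. But the paper does independently carry out precisely the construction you are missing, in Theorem~\ref{triangbasis} (the triangulation splines). The device there is to add chords from $v_1$ to each $v_i$ labeled $\gcd(\ell_i,\dots,\ell_n)$, set $h_{k+1}=\lcm(\ell_k,\gcd(\ell_{k+1},\dots,\ell_n))$, and then build $h_{k+2},\dots,h_n$ forward using Proposition~\ref{modprop} to solve the pair $h_{i+1}\equiv h_i\pmod{\ell_i}$, $h_{i+1}\equiv 0\pmod{\gcd(\ell_{i+1},\dots,\ell_n)}$ at each step; the CRT compatibility condition $\gcd(\ell_i,\dots,\ell_n)\mid h_i$ is guaranteed inductively by the chord congruence from the previous step. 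That inductive bookkeeping is exactly what your proposal needs to become a proof.
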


The smallest flow-up classes exist and form a basis for the set of splines given any edge-labeled cycle.

\begin{theorem}[Basis for $n$-Cycles {\cite[Thrm 4.7]{HMR}}]
\label{HMRbasis}
	Fix a cycle with edge labels $(C_n,L)$. The smallest flow-up classes $\G_0, \G_1, \dots, \G_{n-1}$  exist on $(C_n,L)$ and form a basis over the integers for the $\Z$-module of splines  on $(C_n,L)$.
\end{theorem}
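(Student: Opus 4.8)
The plan is to establish, in order, the existence of the smallest flow-up classes $\G_0,\dots,\G_{n-1}$ and then the two halves of the basis assertion---spanning and linear independence---using throughout that the splines on $(C_n,L)$ form a $\Z$-module, since the defining congruences are linear and hence preserved under integer linear combinations. \emph{Linear independence} is immediate from triangularity: by Theorem~\ref{FlowUpClassesExist} and Theorem~\ref{SmallestElemn}, each $\G_k$ has exactly $k$ leading zeros and a nonzero $(k+1)$st entry, so the $n\times n$ matrix with rows $\G_0,\dots,\G_{n-1}$ is upper triangular with nonzero diagonal, hence nonsingular over $\mathbb{Q}$ and a fortiori over $\Z$. \emph{Spanning over $\Z$} I would obtain by an iterated reduction. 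Given an arbitrary spline $\G=(g_1,\dots,g_n)$, subtract $g_1\G_0$ (here $\G_0=(1,\dots,1)$ by Proposition~\ref{TrivialSplines}) to kill the first coordinate; if the resulting spline is nonzero, say with exactly $j$ leading zeros, then Theorem~\ref{SmallestElemn} forces its leading coordinate to be an integer multiple of the leading coordinate $\lcm(\ell_j,\gcd(\ell_{j+1},\dots,\ell_n))$ of $\G_j$, so subtracting the appropriate multiple of $\G_j$ strictly increases the number of leading zeros. After at most $n$ steps the spline is reduced to $0$, and collecting the coefficients used along the way expresses $\G$ as a $\Z$-linear combination of $\G_0,\dots,\G_{n-1}$.

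It remains to construct the smallest flow-up classes. I would induct downward on $k$, from $k=n-1$ to $k=1$, with $k=0$ handled by Proposition~\ref{TrivialSplines}. When $k=n-1$ the only possibly nonzero entry $g_n$ is forced by the two edges at $v_n$ to be a positive multiple of $\lcm(\ell_{n-1},\ell_n)$, so $\G_{n-1}=(0,\dots,0,\lcm(\ell_{n-1},\ell_n))$ is visibly the smallest. For the inductive step, Theorem~\ref{SmallestElemn} furnishes a flow-up class with $k$ leading zeros whose leading entry equals the minimal value $d_k:=\lcm(\ell_k,\gcd(\ell_{k+1},\dots,\ell_n))$, and every flow-up class with $k$ leading zeros has leading entry a positive multiple of $d_k$; by the module structure, the flow-up classes with $k$ leading zeros and leading entry exactly $d_k$ form a single coset of the $\Z$-module of splines with at least $k+1$ leading zeros. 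Starting from one such representative and adjusting it by integer multiples of the already-constructed $\G_{k+1},\dots,\G_{n-1}$, I would drive the remaining coordinates down to their coordinatewise minimum over positive completions, obtaining $\G_k$.

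The step I expect to be the main obstacle is precisely this last adjustment: showing that the coordinatewise minima in positions $k+1$ through $n$ are attained \emph{simultaneously} by one spline. Minimizing the single leading coordinate is clean---it is governed by a cyclic subgroup of $\Z$ together with Theorem~\ref{SmallestElemn}---but the later coordinates are coupled through the edge congruences, and to see that a common minimizer exists one must genuinely use that splines are closed under $\Z$-linear combinations rather than merely that the feasible tuples form a nice set. Once $\G_0,\dots,\G_{n-1}$ have been built, the independence and spanning arguments above complete the proof that they form a $\Z$-basis.
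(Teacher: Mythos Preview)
This theorem is quoted from \cite{HMR} and is not proved in the present paper; it sits in the Preliminaries section as a cited result, so there is no proof here to compare against directly.

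That said, your spanning and independence arguments are essentially the content of the Basis Condition theorem in Section~\ref{bcsec}, which the paper \emph{does} prove, and your reduction matches that proof step for step: peel off multiples of the $\G_k$ one coordinate at a time, using that the leading nonzero entry of any flow-up class with $k$ leading zeros is divisible by the minimal value $d_k=\lcm(\ell_k,\gcd(\ell_{k+1},\dots,\ell_n))$ from Theorem~\ref{SmallestElemn}. Your triangularity argument for independence is likewise the standard one. For the basis assertion alone this already suffices: by Corollary~\ref{bccor}, any collection of flow-up classes whose leading entries equal the $d_k$ is a basis, so you do not need the full ``smallest'' property for that half of the theorem.

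Where your caution is warranted is the \emph{existence} of the smallest flow-up classes as defined---coordinatewise minimal among flow-up classes with positive entries. Your greedy scheme (fix the minimal leading entry, then adjust by $\G_{k+1},\dots,\G_{n-1}$ to drive later coordinates down) certainly produces \emph{a} flow-up class with nonnegative entries, but it is not automatic that the value you land on in coordinate $j$ is the global minimum of that coordinate over \emph{all} admissible flow-up classes with $k$ leading zeros. Minimizing coordinate $k+2$ pins down the coefficient of $\G_{k+1}$, and a different choice of that coefficient could in principle permit a smaller value in coordinate $k+3$ after re-optimizing $c_{k+2}$; nothing in the bare module structure rules this out. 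Closing this gap requires an argument using the specific arithmetic of the cycle congruences, which is what \cite{HMR} supplies. You have correctly located the obstacle; its resolution is the substance of the cited result rather than anything appearing in this paper.
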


\subsection{Useful Computational Tool}

For reasons related to finding an explicit basis for splines on cycles, we want to find a formula for the value of the variable $x$ in the following pair of congruences:
\[ \begin{cases}
x \equiv y \bmod a \\
x \equiv 0 \bmod b \\
\end{cases} \]
We note the conditions for when such a solution exists and we give an explicit formulation for $x$ in terms of $y$, $a$, and $b$ provided a solution does exist.

\begin{proposition}\label{modprop}
Consider the system of congruences
\[ \begin{cases}
x \equiv y \bmod a \\
x \equiv 0 \bmod b. \\
\end{cases} \]
If this system has a solution then one solution is given by the following formula:
\begin{itemize}
\item If $\frac{a}{\gcd(a,b)}=1$ then $x=b$ is a solution to the system. 
\item If $\frac{a}{\gcd(a,b)}\neq 1$ then
\[ x = y \left(\frac{b}{\gcd(a,b)}\right)\left(\frac{b}{\gcd(a,b)}\right)^{-1}_{\bmod  \left(\frac{a}{\gcd(a,b)} \right)} \]
is a solution to the system.
\end{itemize}
\end{proposition}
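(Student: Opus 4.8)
The plan is to verify directly that the proposed formula satisfies both congruences, splitting into the two cases exactly as in the statement. Throughout, write $d = \gcd(a,b)$.

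In the first case, $a/d = 1$ means $a \mid d$, hence $a \mid b$ since $d \mid b$. Then $x = b$ visibly satisfies $x \equiv 0 \bmod b$, and $x = b \equiv 0 \equiv y \bmod a$ provided the system is consistent. Consistency here forces $y \equiv 0 \bmod a$ (since $x \equiv 0 \bmod b$ and $a \mid b$ imply $x \equiv 0 \bmod a$), so $x = b \equiv 0 \equiv y \bmod a$. This case is quick.

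For the second case, the key observation is that $\gcd\!\bigl(\tfrac{b}{d}, \tfrac{a}{d}\bigr) = 1$, so the inverse $\bigl(\tfrac{b}{d}\bigr)^{-1}_{\bmod (a/d)}$ exists and the formula makes sense. Checking $x \equiv 0 \bmod b$ is immediate since $x$ is an integer multiple of $\tfrac{b}{d}$ times $y$; but I must be careful that $x$ is actually divisible by $b$, not just by $\tfrac bd$ — this is where I should use that the consistency of the system implies $d \mid y$. Indeed, a solution $x_0$ satisfies $x_0 \equiv 0 \bmod b$ and $x_0 \equiv y \bmod a$, so $d \mid x_0$ and $d \mid a$ give $d \mid y$; writing $y = d y'$, the formula becomes $x = y' \cdot b \cdot \bigl(\tfrac bd\bigr)^{-1}_{\bmod (a/d)}$, which is a multiple of $b$. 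For the congruence $x \equiv y \bmod a$, I divide through by $d$: it suffices to show $x/d \equiv y/d \bmod (a/d)$, i.e. $y' \cdot \tfrac bd \cdot \bigl(\tfrac bd\bigr)^{-1} \equiv y' \bmod (a/d)$, which is clear from the definition of the modular inverse (here $x/d$ is an integer because $d \mid b \mid x$).

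The main obstacle is bookkeeping around the hypothesis "if this system has a solution": the formula only produces a valid solution once we know $d \mid y$, and that fact is itself a consequence of solvability (in fact equivalent to it, since the two moduli $a,b$ give a system solvable iff $\gcd(a,b) \mid (y - 0)$ by the generalized CRT). So the cleanest route is: first record that solvability is equivalent to $d \mid y$; then, assuming $d \mid y$, write $y = dy'$ and verify the two divisibility/congruence conditions on the displayed formula by the reductions above. No step requires more than elementary manipulation once this is set up; the care is entirely in not dividing by $d$ before one is entitled to.
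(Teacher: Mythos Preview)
Your proposal is correct and follows essentially the same route as the paper: both arguments first extract from solvability the consistency condition $\gcd(a,b)\mid y$, handle the degenerate case $a\mid b$ directly, and in the main case use the coprimality of $a/d$ and $b/d$ together with $y=dy'$ to reduce everything modulo $a/d$. The only cosmetic difference is that the paper \emph{derives} the formula by solving $bt=y+as$ for $t$ and substituting back into $x=bt$, whereas you \emph{verify} the given formula directly; the underlying computation is the same.
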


\begin{proof}
The Chinese Remainder Theorem tells us that this system of congruences is satisfied if and only if $y\equiv 0 \bmod \gcd(a,b)$. In what follows we will assume that a solution exists, and thus that $y\equiv 0 \bmod \gcd(a,b)$.

\emph{Case 1:} Let's deal first with the case where $\frac{a}{\gcd(a,b)}=1$. This condition implies that  $\gcd(a,b)=a$ and so $b=an$ for some $n\in \Z$. Because $y\equiv 0 \bmod \gcd(a,b)$ by assumption and $\gcd(a,b)=a$ we have $y\equiv 0 \bmod a$. In other words, $y= am$ for some $m\in \Z$. Then $x=b$ satisfies the system of congruences because $b$ is congruent to zero modulo $b$ and $b=an$  is congruent to $y=am$ modulo $a$.

\emph{Case 2:} Now suppose $\frac{a}{\gcd(a,b)}\neq 1$. We can rewrite the system of congruences as
\[ \begin{cases}
x = y + as \\
x = bt \\
\end{cases} \]
Equate both expressions.
\[ bt = y+as \]
Recall that $y\equiv 0 \bmod \gcd(a,b)$. This allows us to divide both sides by $\gcd(a,b)$ and get an integer as the result. 
\[ \left(\frac{b}{\gcd(a,b)}\right) t = \frac{y}{\gcd(a,b)} + \left(\frac{a}{\gcd(a,b)}\right)s \]
Putting this back into modular form we have 
\[ \left(\frac{b}{\gcd(a,b)}\right) t =\frac{y}{\gcd(a,b)} \bmod \left(\frac{a}{\gcd(a,b)}\right). \]
The integers $ \left(\frac{b}{\gcd(a,b)}\right)$ and $\left(\frac{a}{\gcd(a,b)}\right)$ are relatively prime so we can take the inverse of the first modulo the second. 
\[ t \equiv \frac{y}{\gcd(a,b)}\left(\frac{b}{\gcd(a,b)}\right)^{-1} \bmod  \left(\frac{a}{\gcd(a,b)} \right). \]
Plug this expression for $t$ into the equation $x=bt$: 
\[ x = y \left(\frac{b}{\gcd(a,b)}\right)\left(\frac{b}{\gcd(a,b)}\right)^{-1}_{\bmod  \left(\frac{a}{\gcd(a,b)} \right)}. \]
This value is a solution to the original system of congruences. 
\end{proof}

Notice that this second case simplifies enormously if  $gcd(a,b)=1$. In this situation $x$ reduces to: $$x = yb[b^{-1}]_{\text{mod $a$}} $$


\section{Basis Condition}\label{bcsec}

Let $(G,L)$ be an arbitrary graph on $n$ vertices with an arbitrary edge-labeling. Consider a set of flow-up classes $\mathcal{G}_0 \ldots \mathcal{G}_{n-1}$  on $(G,L)$. In this section we give a necessary and sufficient condition for this set to form a basis for the module of the splines on $(G,L)$.  Any set $\mathcal{G}_0, \ldots, \mathcal{G}_{n-1}$ that meets this basis condition is called a \emph{flow-up basis}. Such a basis is  useful because linear independence is trivially verified.

Let $\mathcal{G}_0 \ldots \mathcal{G}_{n-1}$ be a set of flow-up classes and for each $i$ denote $$\G_i = (0,\ldots, 0, g_{i+1}^{(i)},\ldots, g_n^{(i)}).$$ The subscript of each $g^{(i)}$ indicates the entry-position of $g^{(i)}$ in the spline $\G_i$. The superscript $(i)$ is to keep track of the fact that we are working with the flow-up class $\G_i$. In much of this paper and in previous work the superscript is suppressed when the flow-up class in question is obvious.

\begin{theorem}[Basis Condition]
The following are equivalent: 
\begin{itemize}
\item The set $\{\G_0,\ldots,\G_{n-1}\}$ forms a flow-up basis.
\item For each flow-up spline  $A_i = (0,\ldots,0,a_{i+1},\ldots,a_{n})$ the entry $a_{i+1}$ of $A_i$ is an integer multiple of the entry $g_{i+1}^{(i)}$ of $\G_i$.
\end{itemize}
\end{theorem}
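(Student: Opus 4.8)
The plan is to prove the two implications separately, after one small shared observation. \textbf{Shared step: the leading entries do not vanish.} I claim that under \emph{either} hypothesis each $g^{(i)}_{i+1}\ne 0$. If $\{\G_0,\dots,\G_{n-1}\}$ is a basis it is in particular linearly independent, and since the matrix with rows $\G_0,\dots,\G_{n-1}$ is upper triangular with diagonal $g^{(0)}_1,\dots,g^{(n-1)}_n$, its determinant $\prod_i g^{(i)}_{i+1}$ must be nonzero, so every $g^{(i)}_{i+1}\ne 0$. If instead condition $(2)$ holds, fix $i$ and let $N$ be the product of all edge labels of $G$. The vertex labeling with $N$ in position $i+1$ and $0$ elsewhere is a spline (an edge not meeting $v_{i+1}$ joins two $0$'s, while an edge meeting $v_{i+1}$ has label dividing $N$) and is a flow-up spline with $i$ leading zeros whose $(i+1)$-entry is $N\ne 0$; hence $(2)$ forces $g^{(i)}_{i+1}\mid N$, so $g^{(i)}_{i+1}\ne 0$. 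In both cases the triangular system $\G_0,\dots,\G_{n-1}$ is linearly independent over $\Z$.

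\textbf{$(2)\Rightarrow(1)$.} It remains to show the $\G_i$ span the module of splines, which I would do by clearing an arbitrary spline one coordinate at a time. Given a spline $A=(a_1,\dots,a_n)$, regard it as a flow-up spline with $0$ leading zeros; by $(2)$ there is $c_0\in\Z$ with $a_1=c_0 g^{(0)}_1$, and then $A-c_0\G_0$ is again a spline (splines form a module) with at least one leading zero. Inductively, if $A-\sum_{j<i}c_j\G_j$ is a spline with at least $i$ leading zeros, then by $(2)$ its $(i+1)$-entry equals $c_i g^{(i)}_{i+1}$ for some $c_i\in\Z$, and subtracting $c_i\G_i$ leaves a spline with at least $i+1$ leading zeros (the first $i$ coordinates stay $0$ since $\G_i$ begins with $i$ zeros, and the $(i+1)$-st is now cleared). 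After $n$ steps nothing is left, so $A=\sum_{i=0}^{n-1}c_i\G_i$; together with linear independence this gives a flow-up basis.

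\textbf{$(1)\Rightarrow(2)$.} Let $A_i=(0,\dots,0,a_{i+1},\dots,a_n)$ be any flow-up spline with $i$ leading zeros and write $A_i=\sum_{j=0}^{n-1}c_j\G_j$ with $c_j\in\Z$ (possible since the $\G_j$ form a basis). Since the $m$-th coordinate of $\G_j$ is $0$ whenever $j\ge m$, comparing coordinates $1,2,\dots,i$ of the two sides in turn yields, by induction on $m$, that $c_0=c_1=\dots=c_{i-1}=0$: at stage $m$ the coordinate equation reduces to $0=c_{m-1}g^{(m-1)}_m$ with $g^{(m-1)}_m\ne 0$. Comparing coordinate $i+1$ then gives $a_{i+1}=c_i g^{(i)}_{i+1}$, i.e.\ $g^{(i)}_{i+1}\mid a_{i+1}$, which is exactly $(2)$.

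\textbf{Main obstacle.} None of the steps is deep; this is essentially triangular-matrix bookkeeping, and I expect no real difficulty. The two places that need a moment's care are the observation that condition $(2)$ does not, on its face, preclude $g^{(i)}_{i+1}=0$ (remedied by the ``single-spike'' splines used above), and the verification in the clearing algorithm that each subtraction both keeps us inside the spline module and increases the number of leading zeros by exactly one.
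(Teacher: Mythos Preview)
Your proof is correct and follows essentially the same triangular-clearing argument as the paper: the $(2)\Rightarrow(1)$ direction is the same inductive ``subtract off $c_i\G_i$ to gain a leading zero'' procedure, and the $(1)\Rightarrow(2)$ direction is the same ``expand in the basis, observe the low coefficients vanish, read off coordinate $i+1$.'' The only difference is that you add an explicit justification that each $g^{(i)}_{i+1}\ne 0$ (via the single-spike spline under hypothesis~(2)), a point the paper leaves implicit; this is a welcome bit of extra care but not a different method.
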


 \begin{proof} 
Suppose that $\G_0,\ldots,\G_{n-1}$ forms a flow-up basis for the module of splines on a graph $(G,L)$. Suppose that $A_i = (0,\ldots,0,a_{i+1},\ldots,a_{n})$ is a spline on $(G,L)$ with exactly $i$ leading zeros. We will show that $a_{i+1} = cg_{i+1}^{(i)}$ for some $c \in \Z$.

 Since $\G_0,\ldots,\G_{n-1}$ form a basis, we can write $A_i$ as a linear combination of the splines $\G_0,\ldots,\G_{n-1}$. The fact that $A_i$ has $i$ leading zeros implies that the coefficients of $\G_0,\ldots,\G_{i-1}$ must be 0. Thus we have $A_i = c_i\G_i + \ldots + c_{n-1}\G_{n-1}$ for some \mbox{$c_i,\ldots,c_{n-1} \in \Z$}. Consider the $(i+1)^{th}$ entry of the splines on the right-hand side of this equation. Note that $\G_i$ is the only element of $\G_i,\ldots,\G_{n-1}$ with a nonzero entry in this position. Considering the $(i+1)^{th}$ entry on each side of the equation, we have $$a_{i+1} = c_ig_{i+1}^{(i)} + c_{i+1}0 + \ldots + c_{n-1}0 = c_ig_{i+1}^{(i)}.$$   

Now we prove the converse. Let $A = (a_1,\ldots, a_n)$ be an arbitrary spline on $(G,L)$. We prove by induction that $$A = A'_j + \sum_{k=0}^{j-1}c_k \G_k$$  for all $1 \leq j \leq n$ where $A'_j$ is a spline with (at least) $j$ leading zeros.

For our base case, note that by hypothesis we have 
$$A = \left(\begin{array}{c}a_{n} -c_0g_{n}^{(0)}\\ \vdots \\ a_2 - c_0g_{2}^{(0)}\\0\end{array}\right) + c_0\G_0$$ since $a_1 = c_0g_{1}^{(0)}$. Letting $A'_1 = (0, a_2 - c_0g_{2}^{(0)},\ldots,a_{n} -c_0g_{n}^{(0)})$ gives \mbox{$A = A'_1+ \sum_{k=0}^{0}c_k \G_k$}. Thus  our claim holds for $j=1$.
 
Suppose as our induction hypothesis that we have $A = A'_i + \sum_{k=0}^{i-1}c_k \G_k$ for some $1 \leq i \leq n-1$. We can write this as $$A = \left(\begin{array}{c}a_{n}' \\ \vdots \\a_{i+1}'\\0\\ \vdots \\0\end{array}\right) + \sum_{k=0}^{i-1}c_k \G_k.$$ 
By hypothesis we have that $a_{i+1}' = c_i g_{i+1}^{(i)}$ for some $c_i \in \Z$. So we can write 
$$A = \left(\begin{array}{c}a_{n}' -c_i g_{n}^{(i)}\\ \vdots \\ a_{i+2}' - c_i g_{i+2}^{(i)}\\ 0 \\ 0\\ \vdots \\0\end{array}\right) +  \sum_{k=0}^{i}c_k \G_k.$$ 
Letting $A'_{i+1} = (0,\ldots,0,0,a_{i+2}' - c_i g_{i+2}^{(i)},\ldots,a_{n}' -c_i g_{n}^{(i)})$ gives us $A = A'_{i+1} +  \sum_{k=0}^{i}c_k \G_k.$

By induction we have $A = A'_j + \sum_{k=0}^{j-1}c_k \G_k$ for all $1 \leq j \leq n$. In particular we have $A = A'_n + \sum_{k=0}^{n-1}c_k \G_k$. But $A'_n$ is a spline with $n$ leading zeros. So $A'_n = (0,\ldots,0)$.  Thus $A =  \sum_{k=0}^{n-1}c_k \G_k$. We conclude that every spline can be written as a linear combination of $\G_0,\ldots, \G_{n-1}$ as desired.
\end{proof}

One important observation is that the basis condition is only a condition on the first nonzero entry of each spline in a set of flow-up classes $\G_0,\ldots, \G_{n-1}$. This gives us the following useful corollary:
\begin{corollary}\label{bccor}
Suppose the set of flow-up classes $\{\G_0,\ldots, \G_{n-1}\}$ forms a basis for the module of splines. Suppose $\{\G'_0,\ldots, \G'_{n-1}\}$ is a set of flow-up classes for which for each $i$ the first nonzero entry of $\G'_i$ equals the first nonzero entry of $\G_i$. Then the set $\{\G'_0,\ldots, \G'_{n-1}\}$ also forms a basis for the module of splines.
\end{corollary}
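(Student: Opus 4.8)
The plan is to deduce this immediately from the Basis Condition established in the preceding theorem, exploiting the fact that the condition appearing there refers to each flow-up class only through its first nonzero entry.

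First I would fix an arbitrary flow-up spline $A_i = (0,\ldots,0,a_{i+1},\ldots,a_n)$ on $(G,L)$ with exactly $i$ leading zeros, for some $0 \le i \le n-1$. Since $\{\G_0,\ldots,\G_{n-1}\}$ is a flow-up basis by hypothesis, the forward direction of the Basis Condition supplies an integer $c$ with $a_{i+1} = c\,g_{i+1}^{(i)}$, where $g_{i+1}^{(i)}$ is the first nonzero entry of $\G_i$. By assumption the first nonzero entry of $\G'_i$ equals $g_{i+1}^{(i)}$, so $a_{i+1}$ is also an integer multiple of the first nonzero entry of $\G'_i$.

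Since $A_i$ and $i$ were arbitrary, this shows that the set of flow-up classes $\{\G'_0,\ldots,\G'_{n-1}\}$ satisfies the second bullet of the Basis Condition. Applying the reverse direction of that theorem to $\{\G'_0,\ldots,\G'_{n-1}\}$ then gives that this set forms a flow-up basis for the module of splines on $(G,L)$, which is exactly the assertion of the corollary. In short, the condition characterizing a flow-up basis depends on $\G_0,\ldots,\G_{n-1}$ only through their leading entries, and these are left unchanged in passing to $\G'_0,\ldots,\G'_{n-1}$.

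There is no substantial obstacle in this argument; the only point requiring a little care is the bookkeeping with indices. One should note that "the first nonzero entry of $\G'_i$" must be understood to sit in position $i+1$ — that is, $\G'_i$, like $\G_i$, has exactly $i$ leading zeros — so that the statement of the Basis Condition applies to the primed family verbatim. Once this is observed, the rest is a direct substitution.
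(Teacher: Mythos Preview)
Your proof is correct and matches the paper's intended argument exactly. The paper does not spell out a proof of this corollary; it simply observes beforehand that the Basis Condition depends on each $\G_i$ only through its leading nonzero entry, which is precisely the content of your argument.
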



\section{The Triangulation Splines}\label{triangsec}

\Newspline \ splines form another basis of flow-up classes for cycles. They are similar to Handschy, Melnick, and Reinders' smallest-value flow-up classes in that the leading nonzero elements of both are the same.  However we give a formula for every entry of the \newspline \  splines, unlike the smallest-value flow-up classes.

\begin{definition}[\Newspline \ Splines]\label{GSFUCdef} 
Fix an edge-labeled cycle $(C_n,L)$. For \mbox{$1 \leq k \leq n-1$} the vector $\h_k = (0,...,0,h_{k+1},...,h_n)$  has entries as follows:
\begin{itemize} 
\item $h_{k+1} = \lcm(\ell_k,\gcd(\ell_{k+1},...,\ell_n))$ 
\item For $k+1 < i \leq n$ if $\frac{\ell_{i-1}}{\gcd(\ell_{i-1},...,\ell_n)}=1$ then $h_i = \gcd(\ell_i,...,\ell_n)$. 
\item For $k+1 <  i \leq n$ if $\frac{\ell_{i-1}}{\gcd(\ell_{i-1},...,\ell_n)}\neq 1$ then
\[ h_i =  h_{i-1} \left(\frac{\gcd(\ell_i,...,\ell_n)}{\gcd(\ell_{i-1},...,\ell_n)}\right)\left(\frac{\gcd(\ell_i,...,\ell_n)}{\gcd(\ell_{i-1},...,\ell_n)}\right)^{-1}_{\bmod \frac{\ell_{i-1}}{\gcd(\ell_{i-1},...,\ell_n)}} \]
\end{itemize}
\end{definition}

The next theorem establishes that \newspline \ splines exist on any edge-labeled cycle. 

\begin{theorem}[Existence of \Newspline \ Splines]\label{triangbasis}
Fix an edge-labeled cycle $(C_n,L)$. For $1\leq k \leq n-1$ the vector  $\mathcal{H}_k$ is a spline on $(C_n,L)$. 
\end{theorem}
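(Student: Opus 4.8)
The plan is to verify directly that the vector $\h_k = (0,\ldots,0,h_{k+1},\ldots,h_n)$ satisfies the spline congruences on each edge of the cycle $C_n$. Label the edges of $C_n$ so that $e_i$ joins $v_i$ and $v_{i+1}$ for $1 \le i \le n-1$, and $e_n$ joins $v_n$ and $v_1$, with labels $\ell_1,\ldots,\ell_n$. There are four families of edges to check: (a) the edges $e_1,\ldots,e_{k-1}$ joining consecutive leading zeros, which is trivial since $0 \equiv 0$; (b) the edge $e_k$ joining $v_k$ (label $0$) and $v_{k+1}$ (label $h_{k+1}$), which requires $h_{k+1} \equiv 0 \bmod \ell_k$; (c) the interior edges $e_i$ for $k+1 \le i \le n-1$ joining $v_i$ (label $h_i$) and $v_{i+1}$ (label $h_{i+1}$), which require $h_{i+1} \equiv h_i \bmod \ell_i$; and (d) the closing edge $e_n$ joining $v_n$ (label $h_n$) and $v_1$ (label $0$), which requires $h_n \equiv 0 \bmod \ell_n$.

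For (b): since $h_{k+1} = \lcm(\ell_k,\gcd(\ell_{k+1},\ldots,\ell_n))$ is by construction a multiple of $\ell_k$, this is immediate. For (c): I would show that each $h_i$ for $k+1 \le i \le n$ is in fact a multiple of $d_i := \gcd(\ell_i,\ldots,\ell_n)$ and satisfies $h_i \equiv h_{i-1} \bmod \ell_{i-1}$; the latter is exactly the system of congruences
\[
\begin{cases} x \equiv h_{i-1} \bmod \ell_{i-1} \\ x \equiv 0 \bmod d_i \end{cases}
\]
whose solution formula is furnished by Proposition \ref{modprop} with $a = \ell_{i-1}$, $b = d_i$, and $\gcd(a,b) = \gcd(\ell_{i-1}, d_i) = \gcd(\ell_{i-1},\ell_i,\ldots,\ell_n) = d_{i-1}$ — and indeed the two bullet cases in Definition \ref{GSFUCdef} mirror exactly the two cases of Proposition \ref{modprop}. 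So Proposition \ref{modprop} gives that $h_i$ as defined solves that system, provided the system is solvable, i.e. provided $h_{i-1} \equiv 0 \bmod d_{i-1}$. This is the inductive hook: assuming $h_{i-1}$ is a multiple of $d_{i-1}$, the system is solvable, $h_i$ solves it, hence $h_i \equiv h_{i-1}\bmod\ell_{i-1}$; and since $d_i \mid h_i$ (from $x \equiv 0 \bmod d_i$, noting $b = d_i$ in Case 1 and $d_i \mid b$-times-stuff in Case 2), the induction propagates. The base case is $i = k+1$: $h_{k+1} = \lcm(\ell_k, d_{k+1})$ is a multiple of $d_{k+1}$, and also $d_k = \gcd(\ell_k,\ldots,\ell_n) = \gcd(\ell_k, d_{k+1})$ divides $\lcm(\ell_k,d_{k+1}) = h_{k+1}$, starting the induction. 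Once we know $d_n \mid h_n$, edge (d) follows because $d_n = \gcd(\ell_n,\ldots,\ell_n) = \ell_n$, so $h_n \equiv 0 \bmod \ell_n$.

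The main obstacle — and really the only nonroutine point — is checking that the relevant systems of congruences are \emph{solvable} at each step, since Proposition \ref{modprop} only produces a solution conditionally. This is precisely what the parallel induction "$d_i \mid h_i$ for all $i \ge k+1$" is designed to handle: solvability of the system at stage $i$ is equivalent (by CRT, as recalled in the proof of Proposition \ref{modprop}) to $h_{i-1} \equiv 0 \bmod \gcd(\ell_{i-1}, d_i) = d_{i-1}$, which is exactly the inductive hypothesis. I would therefore structure the write-up as a single induction on $i$ proving simultaneously that $d_i \mid h_i$ and that $h_i \equiv h_{i-1} \bmod \ell_{i-1}$, then read off edges (a), (b), (d) as easy consequences. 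One small bookkeeping check worth stating explicitly is the identity $\gcd(\ell_{i-1}, \gcd(\ell_i,\ldots,\ell_n)) = \gcd(\ell_{i-1},\ell_i,\ldots,\ell_n)$, used to match the modulus $a/\gcd(a,b)$ in Proposition \ref{modprop} with the modulus $\ell_{i-1}/d_{i-1}$ appearing in Definition \ref{GSFUCdef}.
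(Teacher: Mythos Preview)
Your proposal is correct and is essentially the paper's own argument: the same induction on $i$ with the key invariant $d_i \mid h_i$ (where $d_i=\gcd(\ell_i,\ldots,\ell_n)$), the same application of Proposition~\ref{modprop} with $a=\ell_{i-1}$, $b=d_i$, $\gcd(a,b)=d_{i-1}$, and the same base case $h_{k+1}=\lcm(\ell_k,d_{k+1})$. The only cosmetic difference is that the paper packages the invariant $d_i\mid h_i$ geometrically, by actually adding the triangulation edges $v_1v_i$ labeled $d_i$ to the cycle and phrasing ``$d_i\mid h_i$'' as ``$h_i$ satisfies the downward edge conditions'' on this augmented graph; you carry out the identical induction purely algebraically and fold the final edge $e_n$ into the general step via $d_n=\ell_n$.
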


\begin{proof}

Start with an edge-labeled cycle $(C_n,L)$. For $3 \leq k \leq n-1$  add an edge between vertices $v_1$ and $v_k$ as shown in Figure \ref{triang-cycle}. Label the edge between $v_1$ and $v_k$ with $\gcd(\ell_k,...,\ell_n)$. We will show the vector $\h_k$ satisfies all of the edge conditions represented by this graph, which implies it satisfies the cycle's edge conditions in particular.

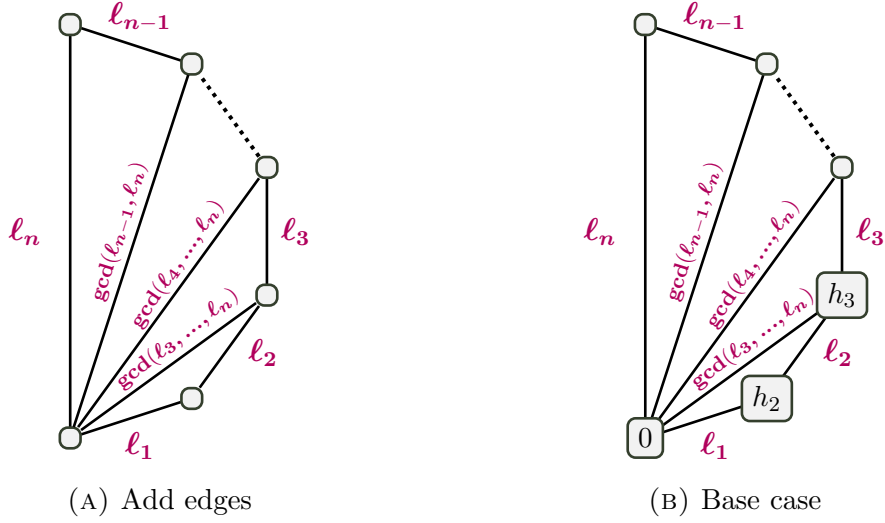
\begin{figure}[t!]
\centering
\begin{subfigure}[t]{0.5\textwidth}
\centering
\begin{tikzpicture}
\begin{scope}
	\pgfmathsetmacro{\r}{2.75}
	\pgfmathsetmacro{\ro}{3}
	\pgfmathsetmacro{\edge}{36}

	\node (a) at ({-90 + \edge*0}:\r) {};
	\node (b) at ({-90 + \edge*1}:\r) {};
	\node (c) at ({-90 + \edge*2}:\r) {};
	\node (d) at ({-90 + \edge*3}:\r) {};
	\node (e) at ({-90 + \edge*4}:\r) {};
	\node (f) at ({-90 + \edge*5}:\r) {};

	\draw[edge] (a)--(b);
	\draw[edge] (b)--(c);
	\draw[edge] (c)--(d);
	\draw[dashededge] (d)--(e);
	\draw[edge] (e)--(f);
	\draw[edge] (f)--(a);

	\node[edgelabel] at ({-72 + \edge*0}:\ro) {$\ell_1$};
	\node[edgelabel,right] at ({-72 + \edge*1}:\r) {$\ell_2$};
	\node[edgelabel] at ({-72 + \edge*2}:\ro) {$\ell_3$};
	\node[edgelabel] at ({-72 + \edge*4}:\ro) {$\ell_{n-1}$};
	\node[edgelabel] at (180:\ro /5) {$\ell_n$};

	\node[rectanglevertex] at (a) {$$};
	\node[rectanglevertex] at (b) {$$};
	\node[rectanglevertex] at (c) {$$};
	\node[rectanglevertex] at (d) {$$};
	\node[rectanglevertex] at (e) {$$};
	\node[rectanglevertex] at (f) {$$};

	\draw[edge] (a)--(c);
	\draw[edge] (a)--(d);
	\draw[edge] (a)-- (e);

	\node[rotate=73,newedgelabel] at (0.65,0) {$\gcd(\ell_{n-1},\ell_n)$};
	\node[rotate=56,newedgelabel] at (1.43,-.43) {$\gcd(\ell_4,...,\ell_n)$};
	\node[rotate=38,newedgelabel] at (1.43,-1.43) {$\gcd(\ell_3,...,\ell_n)$};

\end{scope}
\end{tikzpicture}
\caption{Add edges}
\end{subfigure}
~
\begin{subfigure}[t]{0.5\textwidth}
\centering
\begin{tikzpicture}

\begin{scope}
	\pgfmathsetmacro{\r}{2.75}
	\pgfmathsetmacro{\ro}{3}
	\pgfmathsetmacro{\edge}{36}

	\node (a) at ({-90 + \edge*0}:\r) {};
	\node (b) at ({-90 + \edge*1}:\r) {};
	\node (c) at ({-90 + \edge*2}:\r) {};
	\node (d) at ({-90 + \edge*3}:\r) {};
	\node (e) at ({-90 + \edge*4}:\r) {};
	\node (f) at ({-90 + \edge*5}:\r) {};

	\draw[edge] (a)--(b);
	\draw[edge] (b)--(c);
	\draw[edge] (c)--(d);
	\draw[dashededge] (d)--(e);
	\draw[edge] (e)--(f);
	\draw[edge] (f)--(a);

	\node[edgelabel] at ({-72 + \edge*0}:\ro) {$\ell_1$};
	\node[edgelabel,right] at ({-72 + \edge*1}:\r) {$\ell_2$};
	\node[edgelabel] at ({-72 + \edge*2}:\ro) {$\ell_3$};
	\node[edgelabel] at ({-72 + \edge*4}:\ro) {$\ell_{n-1}$};
	\node[edgelabel] at (180:\ro /5) {$\ell_n$};

	\draw[edge] (a)--(c);
	\draw[edge] (a)--(d);
	\draw[edge] (a)-- (e);

	\node[rotate=73,newedgelabel] at (0.65,0) {$\gcd(\ell_{n-1},\ell_n)$};
	\node[rotate=56,newedgelabel] at (1.43,-.43) {$\gcd(\ell_4,...,\ell_n)$};
	\node[rotate=38,newedgelabel] at (1.43,-1.43) {$\gcd(\ell_3,...,\ell_n)$};

	\node[rectanglevertex] at (a) {$0$};
	\node[rectanglevertex] at (b) {$h_2$};
	\node[rectanglevertex] at (c) {$h_3$};
	\node[rectanglevertex] at (d) {$$};
	\node[rectanglevertex] at (e) {$$};
	\node[rectanglevertex] at (f) {$$};

\end{scope}

\end{tikzpicture}
\caption{Base case}
\end{subfigure}
\caption{Triangulated Cycle}
\label{triang-cycle}
\end{figure}

Label vertices $v_1,...,v_k$ zero. Label vertex $v_{k+1}$ with 
\[ h_{k+1} = \lcm(\ell_k,\gcd(\ell_{k+1},...,\ell_n)). \]
The integer $h_{k+1}$ satisfies the edge conditions on the downward edges (edges with lower-indexed vertices) at vertex $v_{k+1}$ by construction:  
\[ \begin{cases}
h_{k+1} \equiv 0 \bmod \ell_k \\
h_{k+1} \equiv 0 \bmod \gcd(\ell_{k+1},...,\ell_n) \\
\end{cases} \]
This is our base case, and we will label vertices from $h_{k+2}$ to $h_{n-1}$ inductively.

Our induction hypothesis is that $h_{k+1},...,h_i$ for $k+1 \leq i \leq n-1$ satisfy the edge conditions for downward edges. Consider the system of congruences at vertex $v_{i+1}$ represented by the edges labeled $\ell_i$ and $\gcd(\ell_{i+1},...,\ell_n)$:
\[ \begin{cases}
h_{i+1} \equiv h_i \bmod \ell_i \\
h_{i+1} \equiv 0 \bmod \gcd(\ell_{i+1},...,\ell_n) \\
\end{cases} \] 
By the Chinese Remainder Theorem a solution $h_{i+1}$ exists if and only if $h_i \equiv 0 \bmod \gcd(\ell_i, \gcd(\ell_{i+1},...,\ell_n))$. In other words a solution exists if and only if $h_i \equiv 0 \bmod \gcd(\ell_i,...,\ell_n)$. By our induction hypothesis $h_i$ satisfies the downward edge conditions at vertex $v_i$ so in particular $h_i \equiv 0 \bmod \gcd(\ell_i,...,\ell_n)$. Thus a solution $h_{i+1}$ exists. 
This means 
\[ h_{i+1}=  \begin{cases} 
 h_{i} \left(\frac{\gcd(\ell_{i+1},...,\ell_n)}{\gcd(\ell_{i},...,\ell_n)}\right)\left(\frac{\gcd(\ell_{i+1},...,\ell_n)}{\gcd(\ell_{i},...,\ell_n)}\right)^{-1}_{\bmod \frac{\ell_{i}}{\gcd(\ell_{i+1},...,\ell_n)}} & \text{ if } \frac{\ell_i}{\gcd(\ell_i,...,\ell_n)}\neq 1 \\
 \gcd(\ell_{i+1},...,\ell_n) & \text{ if }  \frac{\ell_i}{\gcd(\ell_i,...,\ell_n)} = 1
 \end{cases}
 \]
is a solution by Proposition \ref{modprop}. 

In conclusion we can label each vertex $v_i$ for $k+1 < i \leq n-1$ with 
\[ h_{i} = \begin{cases}
  h_{i-1} \left(\frac{\gcd(\ell_{i},...,\ell_n)}{\gcd(\ell_{i-1},...,\ell_n)}\right)\left(\frac{\gcd(\ell_{i},...,\ell_n)}{\gcd(\ell_{i-1},...,\ell_n)}\right)^{-1}_{\bmod \frac{\ell_{i-1}}{\gcd(\ell_{i-1},...,\ell_n)}} &  \text{ if } \frac{\ell_{i-1}}{\gcd(\ell_{i-1},...,\ell_n)}\neq 1 \\
   \gcd(\ell_i,...,\ell_n) &  \text{ if } \frac{\ell_{i-1}}{\gcd(\ell_{i-1},...,\ell_n)} = 1
  \end{cases}
  \]
and $h_i$ will satisfy the edge conditions represented by the edges labeled $\ell_{i-1}$ and $\gcd(\ell_i,...,\ell_n)$.

Lastly for an integer $h_n$ to satisfy the edge conditions at vertex $v_n$ it must satisfy the following system of congruences:
\[ \begin{cases}
h_n \equiv h_{n-1} \bmod \ell_{n-1} \\
h_n \equiv 0 \bmod \ell_n \\
\end{cases} \]
The Chinese Remainder Theorem tells us that a solution $h_n$ exists to this system if and only if $h_{n-1} \equiv 0 \bmod \gcd(\ell_{n-1},\ell_n)$. We showed by induction that our choice of $h_{n-1}$ satisfies the edge conditions of the downward edges at the $(n-1)$-th vertex. In particular this means $h_{n-1} \equiv 0 \bmod \gcd(\ell_{n-1},\ell_n)$ because this is the edge condition represented by the edge labeled $\gcd(\ell_{n-1},\ell_n)$. 
Therefore 
\[ h_{n}=  \begin{cases} 
h_{n-1} \left(\frac{\ell_n}{\gcd(\ell_{n-1},\ell_n)}\right)\left(\frac{\ell_n}{\gcd(\ell_{n-1},\ell_n)}\right)^{-1}_{\bmod \frac{\ell_{n-1}}{\gcd(\ell_{n-1},\ell_n)}} &  \text{ if } \frac{\ell_{n-1}}{\gcd(\ell_{n-1},\ell_n)}\neq 1\\
\ell_n &  \text{ if } \frac{\ell_{n-1}}{\gcd(\ell_{n-1},\ell_n)}= 1
\end{cases} \]
satisfies the vertex $v_n$ edge conditions by Proposition \ref{modprop}. Choose this integer to label the $n$-th vertex.

All of the congruences represented by the graph are accounted for so the vector $\h_k=(0,...,0,h_{k+1},...,h_n)$ is a spline on the graph. In particular $\h_k$ is a spline on the cycle $(C_n,L)$ as desired.

\end{proof} 

The Corollary to the Basis Condition Theorem allows us to succinctly conclude that the set of \newspline \ splines $\h_0,...,\h_{n-1}$ forms a basis for the set of splines on an edge-labeled cycle. 

\begin{theorem}\label{GSFUCthm}
Fix an edge-labeled cycle $(C_n,L)$. The set of \newspline \ splines $\h_0,...,\h_{n-1}$ form a basis for the set of splines on $(C_n,L)$. 
\end{theorem}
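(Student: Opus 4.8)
The plan is to obtain Theorem~\ref{GSFUCthm} directly from Corollary~\ref{bccor}, the Handschy--Melnick--Reinders basis (Theorem~\ref{HMRbasis}), and the leading-element formula (Theorem~\ref{SmallestElemn}). The point is that each $\h_k$ is a flow-up class with exactly the same first nonzero entry as the $k$-th smallest flow-up class $\G_k$, so the two sets are basis-equivalent in the sense of Section~\ref{bcsec}.

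First I would record that each $\h_k$ is a flow-up class on $(C_n,L)$. For $1\le k\le n-1$ this is immediate: $\h_k$ is a spline by Theorem~\ref{triangbasis}, and its entry $h_{k+1}=\lcm(\ell_k,\gcd(\ell_{k+1},\dots,\ell_n))$ is a positive integer, so $\h_k$ has exactly $k$ leading zeros. For $k=0$ one takes $\h_0=(1,\dots,1)=\G_0$, the trivial spline of Proposition~\ref{TrivialSplines}, which is a flow-up class with no leading zeros.

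Next I would check that the first nonzero entry of $\h_k$ coincides with that of the smallest flow-up class $\G_k$. For $1\le k\le n-2$, applying Theorem~\ref{SmallestElemn} with its index equal to $k+1$ (which satisfies $2\le k+1<n$) shows that the smallest flow-up class $\G_k$ has first nonzero entry $\lcm(\ell_k,\gcd(\ell_{k+1},\dots,\ell_n))=h_{k+1}$, in position $k+1$. For $k=0$ both first entries equal $1$. The remaining case $k=n-1$ is the one point outside the range of Theorem~\ref{SmallestElemn}, and I would dispatch it by hand: $\h_{n-1}$ has a single nonzero entry $h_n=\lcm(\ell_{n-1},\gcd(\ell_n))=\lcm(\ell_{n-1},\ell_n)$, and for any flow-up spline $(0,\dots,0,a_n)$ the two edges at $v_n$ (joining it to the zero-labeled vertices $v_{n-1}$ and $v_1$) force $a_n\equiv 0\bmod\ell_{n-1}$ and $a_n\equiv 0\bmod\ell_n$, hence $\lcm(\ell_{n-1},\ell_n)\mid a_n$; since $(0,\dots,0,\lcm(\ell_{n-1},\ell_n))$ does satisfy every edge condition of $C_n$, the smallest such $a_n$ is exactly $\lcm(\ell_{n-1},\ell_n)$, matching $h_n$.

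Finally, Theorem~\ref{HMRbasis} says the smallest flow-up classes $\G_0,\dots,\G_{n-1}$ form a basis for the $\Z$-module of splines on $(C_n,L)$; since $\{\h_0,\dots,\h_{n-1}\}$ is a set of flow-up classes whose first nonzero entries agree with those of $\G_0,\dots,\G_{n-1}$, Corollary~\ref{bccor} lets us conclude that $\{\h_0,\dots,\h_{n-1}\}$ is also a basis. The only mildly subtle step is the endpoint $k=n-1$, which just misses the hypotheses of Theorem~\ref{SmallestElemn} and needs the short direct argument above; the rest is bookkeeping. (Equivalently, one could bypass the Corollary and invoke the Basis Condition Theorem directly: for any flow-up spline $A_i=(0,\dots,0,a_{i+1},\dots,a_n)$, Theorem~\ref{SmallestElemn} gives that $h_{i+1}^{(i)}=\lcm(\ell_i,\gcd(\ell_{i+1},\dots,\ell_n))$ divides $a_{i+1}$, which is precisely the basis condition, with the $i=n-1$ case handled as above.)
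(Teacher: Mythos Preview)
Your proof is correct and follows essentially the same approach as the paper: invoke Theorem~\ref{HMRbasis}, observe that the leading entries of $\h_k$ and $\G_k$ agree, and apply Corollary~\ref{bccor}. The paper simply asserts the leading-entry match ``by construction,'' whereas you spell out why Theorem~\ref{SmallestElemn} covers $1\le k\le n-2$ and supply a direct argument for the endpoint $k=n-1$; this extra care is warranted but does not change the strategy.
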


\begin{proof}
The set of smallest flow-up classes $\G_0,...,\G_{n-1}$ form a basis for the set of splines on $(C_n,L)$ by Theorem \ref{HMRbasis}. The leading entry of $\h_k$ equals the leading entry of $\G_k$ by construction  for $0 \leq k \leq n-1$. Thus the set of \newspline \ splines $\h_0,...,\h_k$ forms a basis for the set of splines on $(C_n,L)$ by Corollary \ref{bccor}. 
\end{proof}

As an example, we calculate the triangulation basis for the $4$-cycle with edge labels $\{2,6,10,15\}$. 
\begin{center}
\begin{tikzpicture}
	\begin{scope}[scale=.7]
		\pgfmathsetmacro{\r}{1.25}
		
		\draw[edge] (-135:\r)--(-45:\r);
		\draw[edge] (-45:\r)--(45:\r);
		\draw[edge] (45:\r)--(135:\r);
		\draw[edge] (135:\r) -- (-135:\r);

		\node[edgelabel] at (-90:\r) {$2$};
		\node[edgelabel] at (0:\r) {$6$};
		\node[edgelabel] at (90:\r) {$15$};
		\node[edgelabel] at (180:\r) {$10$};
	
		\node[rectanglevertex] at (-135:\r) {$$};
		\node[rectanglevertex] at (-45:\r) {$$};
		\node[rectanglevertex] at (45:\r) {$$};
		\node[rectanglevertex] at (135:\r) {$$};
	\end{scope}
\end{tikzpicture}
\end{center}
The first basis element $\h_0$ is, as always, the trivial spline $(1,1,1,1)$. The nonzero entries of the second basis element $\h_1$ are calculated as follows: 
\begin{flalign*}
h_2^{(1)} &=\lcm(2,\gcd(6,10,15))=2 \\
h_3^{(1)} & =2\left(\frac{\gcd(15,10)}{\gcd(6,15,10)}\right)\left(\frac{\gcd(15,10)}{\gcd(6,15,10)}\right)^{-1}_{\bmod \frac{6}{\gcd(6,15,10)}}=2\cdot5\cdot(5)^{-1}_{\bmod 6}=50 \\
h_4^{(1)} &=50\left(\frac{\gcd(10)}{\gcd(15,10)}\right)\left(\frac{\gcd(10)}{\gcd(15,10)}\right)^{-1}_{\bmod \frac{15}{\gcd(15,10)}}=50\cdot2\cdot(2)^{-1}_{\bmod 3}=200
\end{flalign*}
The nonzero entries of the third basis element $\h_2$ are calculated as follows:
\begin{flalign*}
h_3^{(2)} &=\lcm(6,\gcd(10,15))=30 \\
h_4^{(2)} &=30\left(\frac{\gcd(10)}{\gcd(15,10)}\right)\left(\frac{\gcd(10)}{\gcd(15,10)}\right)^{-1}_{\bmod \frac{15}{\gcd(15,10)}}=50\cdot2\cdot(2)^{-1}_{\bmod 3}=120
\end{flalign*}
The only  nonzero element of the final basis element $\h_3$ is $h_4^{(3)}=\lcm(15,10)=30$. Thus we have the following triangulation basis for the $4$-cycle with edge labels $\{2,6,10,15\}$: $\h_0=(1,1,1,1)$, $\h_1=(0,2,15,200)$, $\h_3=(0,0,30,120)$, and \mbox{$\h_4=(0,0,0,30)$}.


\section{The King Splines}\label{kingsec}
In this section we define King splines on $n$-cycles and prove that they form a basis for the set of splines.

\begin{definition}[King splines]
\label{KingSplines}
Fix a cycle with edge-labels $(C_n,L)$ and assume $\ell_{n-1}$ and $\ell_n$ relatively prime. The King splines on $(C_n,L)$ are the vectors

\[K_0= \begin{pmatrix} 1 \\ 1 \\ \vdots \\ 1 \\ 1 \\ 1 \end{pmatrix},  
K_1= \begin{pmatrix} k_1 \\ \ell_1 \\ \vdots \\ \ell_1 \\ \ell_1 \\ 0 \end{pmatrix},
K_2 = \begin{pmatrix} k_2 \\ \ell_2 \\ \vdots \\ \ell_2 \\ 0 \\ 0 \end{pmatrix}, ... ,
 K_{n-1}= \begin{pmatrix} k_{n-1} \\ 0 \\ \vdots \\ 0 \\ 0 \\ 0 \end{pmatrix}\]
where
\[ k_i= \begin{cases}
\gthree & \text{for } 1 \leq i \leq n-2 \\
\ell_{n-1}\ell_n & \text{for } i=n-1.  
\end{cases} \]
By convention, we call $K_0$ the trivial King spline. 
\end{definition}

As our terminology suggests, the King splines are in fact splines.

\begin{theorem}
Let $n\geq 3$. Fix a cycle with edge-labels $(C_n,L)$ with $\ell_{n-1}$ and $\ell_n$ relatively prime. The King splines $K_0 ,...,K_{n-1}$ are splines on $(C_n,L)$. 
\end{theorem}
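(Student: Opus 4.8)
The plan is to verify directly that each vector $K_i$ satisfies every edge congruence of the cycle $(C_n,L)$. Since the $K_i$ are flow-up classes with a very rigid shape — a single ``head'' entry $k_i$ in position $1$, a block of copies of $\ell_i$ in positions $2,\dots,i$, and zeros thereafter — almost all of the edge conditions are immediate, and the real content is concentrated at two vertices. First I would dispose of $K_0=(1,\dots,1)$ by Proposition \ref{TrivialSplines}. For $K_{n-1}=(k_{n-1},0,\dots,0)=(\ell_{n-1}\ell_n,0,\dots,0)$, the only nontrivial conditions are at $v_1$: the edge $e_n$ joins $v_n$ and $v_1$ and requires $k_{n-1}\equiv 0 \bmod \ell_n$, while the edge $e_1$ joins $v_1$ and $v_2$ and requires $k_{n-1}\equiv 0 \bmod \ell_1$. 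Wait — that last one is false in general, so the head entry must actually be read against $e_1$ too; I need to re-examine which edge is $e_1$. Taking the standard cycle labeling where $e_i$ joins $v_i$ and $v_{i+1}$ (indices mod $n$), the edges incident to $v_1$ are $e_n$ (to $v_n$) and $e_1$ (to $v_2$). For $K_{n-1}$ this forces $\ell_{n-1}\ell_n\equiv 0\bmod \ell_n$, true, and $\ell_{n-1}\ell_n\equiv g_2 \bmod \ell_1$ where $g_2=0$; so I must check $\ell_{n-1}\ell_n\equiv 0 \bmod \ell_1$. This shows the head entries $k_i$ are genuinely designed to be divisible by $\ell_1$, and indeed the formula $k_i=\gthree$ has $\ell_i$ as a factor, so $k_i\equiv 0\bmod \ell_1$ whenever $i=1$, but for $i\ge 2$ I instead need $k_i\equiv \ell_i \bmod \ell_1$ since $g_2=\ell_i$ for those splines. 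So the correct bookkeeping is: edge $e_1$ at $v_1$ reads $k_i\equiv (\text{entry at }v_2)\bmod\ell_1$, which is $\ell_i$ for $1\le i\le n-2$ and $0$ for $i=n-1$.

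With the indexing pinned down, here is the list of congruences to verify for $K_i$ with $1\le i\le n-2$: (i) at $v_1$, edge $e_1$: $k_i\equiv \ell_i\bmod \ell_1$; (ii) at $v_1$, edge $e_n$: $k_i\equiv 0 \bmod \ell_n$ (since the $v_n$-entry is $0$); (iii) at the vertices inside the constant block, edges among $v_2,\dots,v_i$: $\ell_i\equiv \ell_i$, trivial; (iv) at the ``step down'' vertex $v_{i+1}$, edge $e_i$: $\ell_i\equiv 0\bmod \ell_i$, trivial; (v) at all vertices $v_{i+1},\dots,v_n$ further along, edges $e_{i+1},\dots,e_{n-1}$: $0\equiv 0$, trivial; and one more at $v_{i+1}$ only if $i+1=1$, which does not occur. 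So everything reduces to (i) and (ii). Now I would invoke Proposition \ref{modprop}: $k_i=\gthree=\ell_i\cdot\ell_n\cdot[\ell_n^{-1}]_{\bmod \ell_{n-1}}$ is exactly the solution furnished by that proposition (in the $\gcd=1$ simplification noted after its proof) to the system $x\equiv \ell_i\bmod \ell_{n-1}$, $x\equiv 0\bmod \ell_n$ — wait, I want divisibility by $\ell_1$, not a congruence mod $\ell_{n-1}$. Let me recheck: condition (ii) is $k_i\equiv 0\bmod \ell_n$, and $k_i=\ell_i\ell_n[\ell_n^{-1}]_{\bmod\ell_{n-1}}$ has $\ell_n$ as an explicit factor, so (ii) holds trivially. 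Condition (i) is $k_i\equiv \ell_i\bmod \ell_1$; but $\ell_i$ is a factor of $k_i$, so $k_i\equiv 0\bmod \ell_i$; this gives $k_i\equiv 0 \bmod\gcd(\ell_i,\ell_1)$ but not $k_i\equiv\ell_i\bmod\ell_1$ in general. Hence the edge $e_1$ must not join $v_1$ and $v_2$ in this paper's convention — the paper's flow-up classes put the head at $v_1$ precisely because $e_1$ is the edge between $v_1$ and $v_n$, or the cycle is oriented the other way. I will adopt the convention consistent with Definition \ref{flowupdef} and \ref{GSFUCdef}, where a flow-up class $\G_k$ has its first nonzero entry at $v_{k+1}$ and the relevant edges at $v_{i+1}$ are $\ell_i$ (down to $v_i$) and, for the wrap-around, the edge carrying $\ell_n$ or a $\gcd$ to $v_1$; so for $K_i$ the two live conditions are at $v_1$ against $\ell_n$ (the wrap edge, giving $k_i\equiv 0\bmod\ell_n$) and at $v_{i+1}$ against $\ell_i$ (giving $k_i\equiv \ell_i\bmod\ell_i$, i.e. $0\equiv 0$). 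Both are immediate from the factored form of $k_i$, and the case $i=n-1$ has $k_{n-1}=\ell_{n-1}\ell_n$, again divisible by $\ell_n$, with the $v_n$ condition $k_{n-1}\equiv 0\bmod\ell_{n-1}$ also immediate.

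So the skeleton of the write-up is: fix the cycle convention; handle $K_0$ by Proposition \ref{TrivialSplines}; for $1\le i\le n-2$ observe that all edge congruences among positions $2,\dots,n$ are of the form $a\equiv a$ or $0\equiv 0$, leaving only the two congruences at $v_1$ (and its neighbor), both of which follow because $k_i=\ell_i\ell_n[\ell_n^{-1}]_{\bmod\ell_{n-1}}$ is divisible by both $\ell_i$ and $\ell_n$ and is $\equiv\ell_i\bmod\ell_{n-1}$ — the last congruence is where the hypothesis $\gcd(\ell_{n-1},\ell_n)=1$ is used, via Proposition \ref{modprop}; finally treat $K_{n-1}$ with $k_{n-1}=\ell_{n-1}\ell_n$ the same way. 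The main obstacle is purely organizational: getting the edge-incidence bookkeeping exactly right so that each of the (many, but trivial) congruences is matched to the correct pair of adjacent entries, and isolating the at most two congruences per $K_i$ that carry real content. Once the indexing is fixed, each nontrivial step is a one-line divisibility or modular-inverse check justified by Proposition \ref{modprop} and the relative primality of $\ell_{n-1}$ and $\ell_n$; no induction is needed, unlike in the proof of Theorem \ref{triangbasis}.
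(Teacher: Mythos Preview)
Your overall strategy --- verify every edge congruence directly, observe that almost all of them are of the trivial form $a\equiv a$ or $0\equiv 0$, and handle the one or two nontrivial ones via Proposition~\ref{modprop} --- is exactly the paper's proof. The content you eventually isolate is also correct: the two live conditions for $K_i$ with $1\le i\le n-2$ are $k_i\equiv \ell_i \bmod \ell_{n-1}$ and $k_i\equiv 0\bmod \ell_n$, and Proposition~\ref{modprop} (in its $\gcd=1$ simplification) delivers both for $k_i=\ell_i\,\ell_n\,[\ell_n^{-1}]_{\bmod \ell_{n-1}}$; for $K_{n-1}$ the conditions $k_{n-1}\equiv 0\bmod \ell_{n-1}$ and $k_{n-1}\equiv 0\bmod \ell_n$ are immediate from $k_{n-1}=\ell_{n-1}\ell_n$.

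Where you go astray is the indexing, and you never actually recover from it. In the paper's column vectors the $n$-th entry is on \emph{top} and the first entry is on the bottom, so as a tuple $K_i=(0,\ldots,0,\ell_i,\ldots,\ell_i,k_i)$: the first $i$ entries are $0$, entries $i{+}1$ through $n{-}1$ equal $\ell_i$, and the entry at $v_n$ is $k_i$. Consequently the nontrivial congruences live at $v_n$, along edges $e_{n-1}$ (joining $v_{n-1}$ and $v_n$) and $e_n$ (joining $v_n$ and $v_1$), not at $v_1$ as you repeatedly write. Your list of ``conditions (i)--(v)'' and your final summary (``the two congruences at $v_1$ and its neighbor'') are mislocated for this reason; in particular the claim that divisibility of $k_i$ by $\ell_i$ is one of the needed conditions is spurious --- the edge $e_i$ sits between the $0$-block and the $\ell_i$-block and does not see $k_i$ at all. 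Once you fix the placement of $k_i$, the bookkeeping collapses to exactly the list the paper gives, and no further ideas are required.
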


\begin{proof}

First we note that the trivial King spline $K_0$ is the same as the trivial spline $\G_0$ which is indeed a spline on $(C_n,L)$ by Proposition \ref{TrivialSplines}.

Consider an arbitrary King spline $K_i = (0,\ldots,0,\ell_i,\ldots,\ell_i,k_{n-1})$ where \mbox{$1 \leq i \leq n-2$}. It has zero for its first $i$ entries, $\ell_i$ for entries $i+1$ to $n-1$, and $k_{n-1}$ for its last entry. We want to show that $K_i$ is a spline on $(C_n,L)$. Note that zero is congruent to itself modulo any integer, so in particular the following congruences are satisfied: 

\begin{equation}\label{neweq1}
\begin{cases}
 0 \equiv 0 \bmod \ell_j & \text{for } 1 \leq j \leq i-1 \\
\end{cases} 
\end{equation}

Also, since the integer $\ell_i$ is congruent to zero modulo $\ell_i$ we have

\begin{equation}\label{neweq2}
\ell_i \equiv 0 \bmod \ell_i 
\end{equation}

The integer $\ell_i$ is congruent to itself modulo any integer, so in particular the following congruences are satisfied: 

\begin{equation}\label{neweq3}
\begin{cases}
 \ell_i \equiv \ell_i \bmod \ell_{j} & \text{for } i+1 \leq j \leq n-2 \\
\end{cases} 
\end{equation}

Finally we know $k_i=\gthree$ satisfies the following two congruences
\begin{equation}\label{neweq4} \begin{cases}
k_i \equiv \ell_i \bmod \ell_{n-1} \\
k_i \equiv 0 \bmod \ell_n \\
\end{cases} 
\end{equation}
by Proposition \ref{modprop}. Collect the congruences in \ref{neweq1}, \ref{neweq2}, \ref{neweq3}, and \ref{neweq4} into a single system of congruences. This system represents the edge conditions on $(C_n,L)$. The vector $K_i$ satisfies all of these congruences so $K_i$ is a spline on $(C_n,L)$.

Now consider the vector $K_{n-1}=(0,...,0,k_{n-1})$.  Zero is congruent to itself modulo any integer, so the following system of congruences is satisfied: 
\begin{equation}\label{eq4}
\begin{cases}
 0 \equiv 0 \bmod \ell_j  & \text{for } 1 \leq j \leq n-2.
\end{cases}
\end{equation}
Since $k_{n-1}=\ell_{n-1} \ell_n$ we know 
\begin{equation}\label{neweq5}
\begin{cases}
k_{n-1} \equiv 0 \bmod \ell_{n-1} \\
k_{n-1} \equiv 0 \bmod \ell_n \\
\end{cases}
\end{equation}
Collect the congruences in \ref{eq4} and \ref{neweq5} into a single system. This system represents the edge conditions on $(C_n,L)$. The vector $K_{n-1}$ satisfies all of these congruences so $K_{n-1}$ is a spline on $(C_n,L)$.

Thus we have that $K_i$ is a spline for all $0 \leq i \leq n-1$ as desired.

\end{proof}

Now that we know the King splines are splines, we confirm that they form a basis.

\begin{theorem}
Fix a cycle with edge labels $(C_n,L)$ with $\ell_{n-1}$ and $\ell_n$  relatively prime. The set of King splines $K_0,...,K_{n-1}$ forms a basis for the set of splines on $(C_n,L)$.
\end{theorem}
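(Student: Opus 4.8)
The plan is to reduce the statement to Corollary \ref{bccor}. We already have a flow-up basis with explicitly known leading terms --- either the Handschy--Melnick--Reinders smallest-value basis $\G_0,\ldots,\G_{n-1}$ (Theorem \ref{HMRbasis}, with leading entries described by Theorem \ref{SmallestElemn}) or, equivalently, the triangulation basis $\h_0,\ldots,\h_{n-1}$ (Theorem \ref{GSFUCthm}), whose $i$-th element has first nonzero entry $\lcm(\ell_i,\gcd(\ell_{i+1},\ldots,\ell_n))$ for $1\le i\le n-1$ and $1$ for $i=0$. So my first step would be to observe that each $K_i$ is a flow-up class: the previous theorem gives that $K_i$ is a spline, and Definition \ref{KingSplines} shows that $K_i$ has exactly $i$ leading zeros. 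After that, the only thing left to do is compare first nonzero entries and quote Corollary \ref{bccor}.

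The second step --- and the only one with any content --- is a short divisibility argument. For $1\le i\le n-2$ the integer $\gcd(\ell_{i+1},\ldots,\ell_n)$ divides both $\ell_{n-1}$ and $\ell_n$, hence divides $\gcd(\ell_{n-1},\ell_n)=1$; so this gcd equals $1$ and $\lcm(\ell_i,\gcd(\ell_{i+1},\ldots,\ell_n))=\ell_i$. Reading Definition \ref{KingSplines}, the first nonzero entry of $K_i$ is exactly $\ell_i$ when $1\le i\le n-2$, so it matches. For $i=n-1$ the reference basis has first nonzero entry $\lcm(\ell_{n-1},\ell_n)=\ell_{n-1}\ell_n$ (again using coprimality), which is precisely $k_{n-1}$, the first nonzero entry of $K_{n-1}$; and $K_0=(1,\ldots,1)$ matches the trivial spline. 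So the first nonzero entries of $K_0,\ldots,K_{n-1}$ coincide with those of the reference basis.

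The final step is to apply Corollary \ref{bccor}: since the reference set is a flow-up basis and $\{K_0,\ldots,K_{n-1}\}$ is a set of flow-up classes with the same first nonzero entries, $\{K_0,\ldots,K_{n-1}\}$ is also a basis for the module of splines on $(C_n,L)$. I do not expect a real obstacle here; the one idea that matters is that coprimality of the last two edge labels forces every trailing gcd $\gcd(\ell_{i+1},\ldots,\ell_n)$ to collapse to $1$, which is exactly what lets the almost-constant King vectors carry the right leading terms. Everything else is matching indices and invoking earlier results.
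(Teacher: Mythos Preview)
Your proposal is correct and follows essentially the same route as the paper: invoke the reference flow-up basis (Theorem \ref{HMRbasis}), match leading entries, and apply Corollary \ref{bccor}. If anything, your version is more explicit than the paper's, which simply asserts that the leading entries of $K_i$ and $\G_i$ agree; your divisibility argument showing $\gcd(\ell_{i+1},\ldots,\ell_n)=1$ under the coprimality hypothesis is exactly the computation that justifies this assertion.
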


\begin{proof}
The set of smallest flow-up classes $\G_0,...,\G_{n-1}$ form a basis for the set of splines on $(C_n,L)$ by Theorem \ref{HMRbasis}. We constructed the King splines so that the leading entry $\K_i$ equals the leading entry of $\G_i$ for $0\leq i \leq n-1$. Thus the set of King splines $K_0,...,K_{n-1}$ forms a basis for the set of splines on $(C_n,L)$ by Corollary \ref{bccor}.
\end{proof}


\section{Multiplication Tables}\label{multsec}

The fact that we have simple explicit formulas for the entries of the King basis is a powerful computational tool. In this section we use the King basis to write the product of any pair of basis elements as a linear combination of basis elements. This kind of calculation is important in geometry and topology, which use splines over polynomial rings to describe cohomology rings.

\subsection{Multiplication Tables for $n$-Cycles on the King Basis} When multiplying splines the operation is performed component-wise. Consider the King basis on a given n-cycle.  

Since the entries in the trivial spline $K_0$ are all ones, multiplying any spline $K_i$ (with $0\leq i \leq n-1$) by $K_0$ simply yields $K_i$. The following theorem gives us the product of any pair of non-trivial King splines.

\bigskip

\begin{theorem}
For arbitrary $K_i, K_j$ with $i, j \neq 0$ and $i \leq  j$, we have the product
 \[ K_i K_j =  l_i K_j + \frac{k_j (k_i - l_i)}{k_{n-1}}K_{n-1}.\]
\end{theorem}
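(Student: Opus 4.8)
The plan is a coordinate-by-coordinate verification, using only that spline multiplication is performed component-wise and that $K_i$ and $K_j$ have very simple entries. First I would record, for $1\le i\le j\le n-1$, the entries of both splines in the four blocks of coordinates: position $1$; positions $2$ through $n-j$; positions $n-j+1$ through $n-i$; and positions $n-i+1$ through $n$. By Definition~\ref{KingSplines}, across these blocks $K_i$ reads $k_i,\ \ell_i,\ \ell_i,\ 0$ and $K_j$ reads $k_j,\ \ell_j,\ 0,\ 0$, where we use $i\le j$ so that $n-j\le n-i$. Several of these index ranges may be empty — the middle two when $i=j$, and an $\ell$-block whenever the relevant index is $n-1$ — but in each such case the corresponding identities hold vacuously, so this can be treated uniformly, with the genuinely degenerate cases $i=n-1$ or $j=n-1$ checked directly at the end.

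Next I would compute both sides in each block. Component-wise, $K_iK_j$ equals $k_ik_j$ in position $1$, equals $\ell_i\ell_j$ in positions $2,\dots,n-j$, and equals $0$ everywhere else (the product $\ell_i\cdot 0$ kills the block $n-j+1,\dots,n-i$). On the other hand $\ell_iK_j$ has entries $\ell_ik_j,\ \ell_i\ell_j,\ 0,\ 0$ in the four blocks, while $\frac{k_j(k_i-\ell_i)}{k_{n-1}}K_{n-1}$ is supported only in position $1$, where it equals $k_j(k_i-\ell_i)$. Adding these, the proposed right-hand side reads $\ell_ik_j+k_j(k_i-\ell_i)=k_ik_j$ in position $1$ and matches $K_iK_j$ coordinate for coordinate in every other position. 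That is precisely the claimed identity.

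The one point that is not purely formal — and the step I would call the main obstacle — is checking that $\frac{k_j(k_i-\ell_i)}{k_{n-1}}$ is genuinely an integer, so that the right-hand side is an honest $\Z$-linear combination of King splines. For this I would invoke the defining congruences of the $k$'s from Proposition~\ref{modprop}: one has $k_i\equiv \ell_i \bmod \ell_{n-1}$ and $k_j\equiv 0 \bmod \ell_n$ — the latter because for $j\le n-2$ the quantity $k_j$ is by construction a multiple of $\ell_n$, and for $j=n-1$ we have $k_j=\ell_{n-1}\ell_n$ — together with $k_{n-1}=\ell_{n-1}\ell_n$. Hence $\ell_{n-1}\mid (k_i-\ell_i)$ and $\ell_n\mid k_j$, and since $\ell_{n-1}$ and $\ell_n$ are coprime their product $k_{n-1}$ divides $k_j(k_i-\ell_i)$. (Alternatively one can sidestep the $k$'s entirely: $K_iK_j$ is again a spline on $(C_n,L)$ since generalized splines form a ring, so it expands in the King basis with integer coefficients; because the supports of $K_0,\dots,K_{n-1}$ are nested, reading the expansion off from the last coordinate upward forces every coefficient but those of $K_j$ and $K_{n-1}$ to vanish, the coefficient of $K_j$ to equal $\ell_i$, and the coefficient of $K_{n-1}$ to equal the displayed ratio, which is then automatically an integer.) Apart from this, the only care needed is the off-by-one bookkeeping of which coordinates carry $\ell_i$ versus $0$.
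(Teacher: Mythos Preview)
Your proof is correct and follows essentially the same route as the paper: a direct entry-by-entry comparison of $K_iK_j$ with $\ell_iK_j$, after which only the top entry differs and is fixed by the $K_{n-1}$ term, followed by the same divisibility argument ($\ell_{n-1}\mid k_i-\ell_i$ and $\ell_n\mid k_j$, hence $k_{n-1}=\ell_{n-1}\ell_n$ divides the numerator) to confirm the coefficient is an integer. Your indexing convention is the reverse of the paper's tuple notation (you read the displayed column vectors top-down), but it is internally consistent; the parenthetical ring-structure argument you offer for integrality is a pleasant extra not appearing in the paper.
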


\begin{proof}

We give a proof by construction. 

 Consider arbitrary  basis elements  $K_i$ and $K_j$ with $i, j \neq 0$ and $i \leq  j$. Their product $K_i K_j$ has zeros up to the $j^{th}$ entry.  The entries numbered $j + 1$ through $n - 1$ are  $\ell_i \cdot \ell_j$. The last entry is $k_i \cdot k_j$. 

Note that $\ell_i \cdot K_j$ has zeros for the first $j$ entries, $\ell_i \cdot \ell_j$ from entries $j + 1$ to $n - 1$, and $\ell_i \cdot k_j$ for the $n^{th}$ entry. This is almost exactly the product $K_i K_j$. However we want this last entry to be $k_i \cdot k_j$. Adding $\frac{k_j (k_i - l_i)}{k_{n-1}}K_{n-1}$ gives the desired result.

\smallskip

 Thus for $K_i K_j$ with $i, j \neq 0$ and $i \leq  j$ we have  
 \begin{center}
 $K_i K_j= \ell_i K_j + \frac{k_i k_j - l_i k_j}{k_{n-1}}K_{n-1} =  \ell_i K_j + \frac{k_j (k_i - l_i)}{k_{n-1}}K_{n-1}$
\end{center}

\bigskip

 Since we are working in the integers, our last step is to prove that the coefficient $$\frac{k_j (k_i - \ell_i)}{k_{n-1}}$$  is indeed an integer. We know $k_i \equiv \ell_i \bmod \ell_{n-1}$ because $K_i$ is a spline. Say \mbox{$k_i - \ell_i = p\ell_{n-1}$} for some $p \in \Z$. Similarly, we know $k_j \equiv 0 \bmod \ell_n$ because $K_j$ is a spline. Say $k_j = q \ell_n$ for some $q \in \Z$. By definition we have  $k_{n-1} = \ell_{n-1}\ell_n$. Plugging these values into the expression $\frac{k_i k_j - l_i k_j}{k_{n-1}}$ yields the following:

\begin{equation*}
\frac{k_j (k_i - \ell_i)}{k_{n-1}}  =  \frac{(q\ell_n)(p\ell_{n-1})}{\ell_{n-1}\ell_n} = pq
\end{equation*}

 Thus $\frac{k_j (k_i - \ell_i)}{k_{n-1}}$ is always an integer. 

\end{proof}

 Note that the product $K_i K_{n -1}$ for any $i \leq n - 1$ simplifies significantly. 

\begin{corollary}
Choose any $i \neq 0$. Then $K_i K_{n-1} = k_iK_{n-1}.$
\end{corollary}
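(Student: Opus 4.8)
The plan is to derive the corollary as an immediate specialization of the preceding theorem, taking $j = n-1$. First I would check that the theorem's hypotheses are satisfied: we are given $i \neq 0$, and since every King spline index satisfies $i \leq n-1 = j$, the condition $i \leq j$ holds (with equality permitted, so this argument also covers the case $K_{n-1}^2$). Substituting $j = n-1$ into the formula $K_iK_j = \ell_i K_j + \frac{k_j(k_i-\ell_i)}{k_{n-1}}K_{n-1}$ yields $K_iK_{n-1} = \ell_i K_{n-1} + \frac{k_{n-1}(k_i-\ell_i)}{k_{n-1}}K_{n-1}$.

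The key simplification is that the coefficient of $K_{n-1}$ collapses: $\frac{k_{n-1}(k_i-\ell_i)}{k_{n-1}} = k_i - \ell_i$, so the right-hand side becomes $(\ell_i + k_i - \ell_i)K_{n-1} = k_i K_{n-1}$, which is exactly the claimed identity. In the boundary case $i = n-1$ the symbol $\ell_{n-1}$ appears formally but cancels identically, so no separate treatment is needed there, and integrality of the coefficient is automatic since $k_i - \ell_i \in \Z$.

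I expect essentially no obstacle here; the only points requiring care are confirming that the range of $i$ matches the theorem's hypothesis and noting that integrality is inherited from (indeed, trivially stronger than) what the theorem already established. As a sanity check I would also verify the identity directly from the explicit shape $K_{n-1} = (0,\ldots,0,k_{n-1})$: multiplying component-wise by $K_i = (0,\ldots,0,\ell_i,\ldots,\ell_i,k_i)$ annihilates every coordinate except the last, leaving $k_i k_{n-1}$ in position $n$, which is precisely the last entry of $k_i K_{n-1}$. This confirms the corollary both as a consequence of the theorem and on its own terms.
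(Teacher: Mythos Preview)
Your proposal is correct and follows essentially the same approach as the paper: both specialize the preceding theorem to $j=n-1$ and observe that the factor $k_{n-1}$ cancels, collapsing $\ell_i K_{n-1} + (k_i-\ell_i)K_{n-1}$ to $k_i K_{n-1}$. Your additional componentwise sanity check is a nice supplement but not part of the paper's argument.
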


\begin{proof}
We apply the formula for the product $K_iK_j$ to the particular case where $j = n-1$ and simplify:

\begin{equation*}
K_i K_{n-1}  =  \ell_i K_{n-1} + \frac{k_{n - 1}(k_i - \ell_i)}{k_{n-1}}K_{n-1}  =  k_i K_{n-1}
\end{equation*}

\end{proof}
\bigskip

\noindent For example consider the 5-cycle with edge labels $\{3, 4, 8, 2, 5\}$. The King basis on a 5-cycle with these labels looks like the following: 

\bigskip 

\begin{multicols}{5}

\begin{center}
\begin{tikzpicture}
	
	\begin{scope}[scale=.8]
		\pgfmathsetmacro{\r}{1}

		\node at (.1,0) {$K_0$};

		\draw[edge] (-72:\r)--(0:\r);
		\draw[edge] (0:\r)--(72:\r);
		\draw[edge] (72:\r)--(144:\r);
		\draw[edge] (144:\r) -- (-144:\r);	
		\draw[edge] (-144:\r) -- (-72:\r);	

		\node[edgelabel] at (-108:\r) {$5$};
		\node[edgelabel] at (-36:\r) {$3$};
		\node[edgelabel] at (36:\r) {$4$};
		\node[edgelabel] at (108:\r) {$8$};
		\node[edgelabel] at (180:\r) {$2$};
		
		\node[vertex] at (-72:\r) {$1$};
		\node[vertex] at (0:\r) {$1$};
		\node[vertex] at (72:\r) {$1$};
		\node[vertex] at (144:\r) {$1$};
		\node[vertex] at (-144:\r) {$1$};
	\end{scope}
\end{tikzpicture}
\end{center}

\columnbreak
\begin{center}
\begin{tikzpicture}
	
	\begin{scope}
		\pgfmathsetmacro{\r}{1}

		\node at (.1,0) {$K_1$};

		\draw[edge] (-72:\r)--(0:\r);
		\draw[edge] (0:\r)--(72:\r);
		\draw[edge] (72:\r)--(144:\r);
		\draw[edge] (144:\r) -- (-144:\r);	
		\draw[edge] (-144:\r) -- (-72:\r);	

		\node[edgelabel] at (-108:\r) {$5$};
		\node[edgelabel] at (-36:\r) {$3$};
		\node[edgelabel] at (36:\r) {$4$};
		\node[edgelabel] at (108:\r) {$8$};
		\node[edgelabel] at (180:\r) {$2$};
		
		\node[vertex] at (-72:\r) {$0$};
		\node[vertex] at (0:\r) {$3$};
		\node[vertex] at (72:\r) {$3$};
		\node[vertex] at (144:\r) {$3$};
		\node[vertex] at (-144:\r) {$15$};
	\end{scope}
\end{tikzpicture}
\end{center}

\columnbreak
\begin{center}
\begin{tikzpicture}
	
	\begin{scope}
		\pgfmathsetmacro{\r}{1}

		\node at (.1,0) {$K_2$};

		\draw[edge] (-72:\r)--(0:\r);
		\draw[edge] (0:\r)--(72:\r);
		\draw[edge] (72:\r)--(144:\r);
		\draw[edge] (144:\r) -- (-144:\r);	
		\draw[edge] (-144:\r) -- (-72:\r);	

		\node[edgelabel] at (-108:\r) {$5$};
		\node[edgelabel] at (-36:\r) {$3$};
		\node[edgelabel] at (36:\r) {$4$};
		\node[edgelabel] at (108:\r) {$8$};
		\node[edgelabel] at (180:\r) {$2$};
		
		\node[vertex] at (-72:\r) {$0$};
		\node[vertex] at (0:\r) {$0$};
		\node[vertex] at (72:\r) {$4$};
		\node[vertex] at (144:\r) {$4$};
		\node[vertex] at (-144:\r) {$20$};
	\end{scope}
\end{tikzpicture}
\end{center}

\columnbreak
\begin{center}
\begin{tikzpicture}
	
	\begin{scope}
		\pgfmathsetmacro{\r}{1}

		\node at (.1,0) {$K_3$};

		\draw[edge] (-72:\r)--(0:\r);
		\draw[edge] (0:\r)--(72:\r);
		\draw[edge] (72:\r)--(144:\r);
		\draw[edge] (144:\r) -- (-144:\r);	
		\draw[edge] (-144:\r) -- (-72:\r);	

		\node[edgelabel] at (-108:\r) {$5$};
		\node[edgelabel] at (-36:\r) {$3$};
		\node[edgelabel] at (36:\r) {$4$};
		\node[edgelabel] at (108:\r) {$8$};
		\node[edgelabel] at (180:\r) {$2$};
		
		\node[vertex] at (-72:\r) {$0$};
		\node[vertex] at (0:\r) {$0$};
		\node[vertex] at (72:\r) {$0$};
		\node[vertex] at (144:\r) {$8$};
		\node[vertex] at (-144:\r) {$40$};
	\end{scope}
\end{tikzpicture}
\end{center}

\columnbreak
\begin{center}
\begin{tikzpicture}
	
	\begin{scope}
		\pgfmathsetmacro{\r}{1}

		\node at (.1,0) {$K_4$};

		\draw[edge] (-72:\r)--(0:\r);
		\draw[edge] (0:\r)--(72:\r);
		\draw[edge] (72:\r)--(144:\r);
		\draw[edge] (144:\r) -- (-144:\r);	
		\draw[edge] (-144:\r) -- (-72:\r);	

		\node[edgelabel] at (-108:\r) {$5$};
		\node[edgelabel] at (-36:\r) {$3$};
		\node[edgelabel] at (36:\r) {$4$};
		\node[edgelabel] at (108:\r) {$8$};
		\node[edgelabel] at (180:\r) {$2$};
		
		\node[vertex] at (-72:\r) {$0$};
		\node[vertex] at (0:\r) {$0$};
		\node[vertex] at (72:\r) {$0$};
		\node[vertex] at (144:\r) {$0$};
		\node[vertex] at (-144:\r) {$10$};
	\end{scope}
\end{tikzpicture}
\end{center}

\end{multicols}

\bigskip

Let's multiply the elements $K_1$ and $K_3$. We obtain \\

\begin{tikzpicture}

	\begin{scope}
	\node at (-3,0) {$K_1 K_3 = $ };
	\end{scope}	

	\begin{scope}
		\pgfmathsetmacro{\r}{1}

		\node at (.1,0) {$K_1$};

		\draw[edge] (-72:\r)--(0:\r);
		\draw[edge] (0:\r)--(72:\r);
		\draw[edge] (72:\r)--(144:\r);
		\draw[edge] (144:\r) -- (-144:\r);	
		\draw[edge] (-144:\r) -- (-72:\r);	

		\node[edgelabel] at (-108:\r) {$5$};
		\node[edgelabel] at (-36:\r) {$3$};
		\node[edgelabel] at (36:\r) {$4$};
		\node[edgelabel] at (108:\r) {$8$};
		\node[edgelabel] at (180:\r) {$2$};
		
		\node[vertex] at (-72:\r) {$0$};
		\node[vertex] at (0:\r) {$3$};
		\node[vertex] at (72:\r) {$3$};
		\node[vertex] at (144:\r) {$3$};
		\node[vertex] at (-144:\r) {$15$};
	\end{scope}

	\begin{scope}
	\node at (.5,0) {\hspace{1in}$\times$\hspace{1in}};
	\end{scope}
	
	\begin{scope}[xshift=1.5in]
		\pgfmathsetmacro{\r}{1}

		\node at (.1,0) {$K_3$};

		\draw[edge] (-72:\r)--(0:\r);
		\draw[edge] (0:\r)--(72:\r);
		\draw[edge] (72:\r)--(144:\r);
		\draw[edge] (144:\r) -- (-144:\r);	
		\draw[edge] (-144:\r) -- (-72:\r);	

		\node[edgelabel] at (-108:\r) {$5$};
		\node[edgelabel] at (-36:\r) {$3$};
		\node[edgelabel] at (36:\r) {$4$};
		\node[edgelabel] at (108:\r) {$8$};
		\node[edgelabel] at (180:\r) {$2$};
		
		\node[vertex] at (-72:\r) {$0$};
		\node[vertex] at (0:\r) {$0$};
		\node[vertex] at (72:\r) {$0$};
		\node[vertex] at (144:\r) {$8$};
		\node[vertex] at (-144:\r) {$40$};
	\end{scope}

	\begin{scope}
	\node at (4.5,0) {\hspace{1in}$=$\hspace{1in}};
	\end{scope}
	
	\begin{scope}[xshift=3in]
		\pgfmathsetmacro{\r}{1}

		\draw[edge] (-72:\r)--(0:\r);
		\draw[edge] (0:\r)--(72:\r);
		\draw[edge] (72:\r)--(144:\r);
		\draw[edge] (144:\r) -- (-144:\r);	
		\draw[edge] (-144:\r) -- (-72:\r);	

		\node[edgelabel] at (-108:\r) {$5$};
		\node[edgelabel] at (-36:\r) {$3$};
		\node[edgelabel] at (36:\r) {$4$};
		\node[edgelabel] at (108:\r) {$8$};
		\node[edgelabel] at (180:\r) {$2$};
		
		\node[vertex] at (-72:\r) {$0$};
		\node[vertex] at (0:\r) {$0$};
		\node[vertex] at (72:\r) {$0$};
		\node[vertex] at (144:\r) {$24$};
		\node[vertex] at (-144:\r) {$600$};
	\end{scope}
\end{tikzpicture}

\bigskip

\noindent By the formula given above \[ K_1 K_3  = 3  K_3 + \frac{40 (15 - 3)}{10}K_4 = 3 K_3 + 48 K_4.\] Pictorially this solution is shown below.  \\

\noindent
\begin{tikzpicture}
	
	\begin{scope}
	\node at (-5,0) {$ 3K_3 + 48 K_4 = $};
	\end{scope}

	\begin{scope}
	\node at (-3.5,0) {$3$};
	\end{scope} 

	\begin{scope}[xshift=-.75in]
		\pgfmathsetmacro{\r}{1}

		\node at (.1,0) {$K_3$};

		\draw[edge] (-72:\r)--(0:\r);
		\draw[edge] (0:\r)--(72:\r);
		\draw[edge] (72:\r)--(144:\r);
		\draw[edge] (144:\r) -- (-144:\r);	
		\draw[edge] (-144:\r) -- (-72:\r);	

		\node[edgelabel] at (-108:\r) {$5$};
		\node[edgelabel] at (-36:\r) {$3$};
		\node[edgelabel] at (36:\r) {$4$};
		\node[edgelabel] at (108:\r) {$8$};
		\node[edgelabel] at (180:\r) {$2$};
		
		\node[vertex] at (-72:\r) {$0$};
		\node[vertex] at (0:\r) {$0$};
		\node[vertex] at (72:\r) {$0$};
		\node[vertex] at (144:\r) {$8$};
		\node[vertex] at (-144:\r) {$40$};
	\end{scope}

	\begin{scope}
	\node at (.5,0) {$+$ \hspace{.1in} $48$};
	\end{scope}	

	\begin{scope}[xshift=1in]
		\pgfmathsetmacro{\r}{1}

		\node at (.1,0) {$K_4$};

		\draw[edge] (-72:\r)--(0:\r);
		\draw[edge] (0:\r)--(72:\r);
		\draw[edge] (72:\r)--(144:\r);
		\draw[edge] (144:\r) -- (-144:\r);	
		\draw[edge] (-144:\r) -- (-72:\r);	

		\node[edgelabel] at (-108:\r) {$5$};
		\node[edgelabel] at (-36:\r) {$3$};
		\node[edgelabel] at (36:\r) {$4$};
		\node[edgelabel] at (108:\r) {$8$};
		\node[edgelabel] at (180:\r) {$2$};
		
		\node[vertex] at (-72:\r) {$0$};
		\node[vertex] at (0:\r) {$0$};
		\node[vertex] at (72:\r) {$0$};
		\node[vertex] at (144:\r) {$0$};
		\node[vertex] at (-144:\r) {$10$};
	\end{scope}

	\begin{scope}
	\node at (3.25,0) {\hspace{1in}$=$\hspace{1in}};
	\end{scope}
	
	\begin{scope}[xshift=2.5in]
		\pgfmathsetmacro{\r}{1}

		\draw[edge] (-72:\r)--(0:\r);
		\draw[edge] (0:\r)--(72:\r);
		\draw[edge] (72:\r)--(144:\r);
		\draw[edge] (144:\r) -- (-144:\r);	
		\draw[edge] (-144:\r) -- (-72:\r);	

		\node[edgelabel] at (-108:\r) {$5$};
		\node[edgelabel] at (-36:\r) {$3$};
		\node[edgelabel] at (36:\r) {$4$};
		\node[edgelabel] at (108:\r) {$8$};
		\node[edgelabel] at (180:\r) {$2$};
		
		\node[vertex] at (-72:\r) {$0$};
		\node[vertex] at (0:\r) {$0$};
		\node[vertex] at (72:\r) {$0$};
		\node[vertex] at (144:\r) {$24$};
		\node[vertex] at (-144:\r) {$600$};
	\end{scope}
\end{tikzpicture}

\begin{remark}
The same argument can be used to give the multiplication table for arbitrarily labeled 3-cycles using the \newspline \ basis (Def \ref{GSFUCdef}, Thrm \ref{GSFUCthm}). Given the basis elements $\h_0, \h_1,$ and $\h_2$ we have the following table
$$\h_0 = \left(\begin{array}{c} 1\\1\\1\end{array}\right),  \h_1 = \left(\begin{array}{c} h_3^{(1)}\\ h_2^{(1)}\\0\end{array}\right),  \h_2 = \left(\begin{array}{c} h_3^{(2)}\\ 0 \\0\end{array}\right)$$
\begin{table}[ht]
\centering
\begin{tabular}{ccccc}
~ & \vline & $\h_0$ & $\h_1$ & $\h_2$ \\
\hline
$\h_0$ & \vline & $\h_0$ & $\h_1$ & $\h_2$ \\
$\h_1$ & \vline & $\h_1$ & $h_{2}^{(1)}\h_1 + \Phi \h_2$ & $h_3^{(1)}\h_2$ \\
$\h_2$ & \vline & $\h_2$ & $h_3^{(1)}\h_2$ & $h_3^{(2)}\h_2$ \\
\end{tabular}
\end{table}

where $\Phi = \frac{h_3^{(1)}(h_3^{(1)} -h_2^{(1)})}{h_3^{(2)}}$.
\end{remark}

Unlike with the King basis, we do not have nice formulas for entries of the \newspline \ basis. This leads to the following open question.

\begin{question}
Is there a positive or combinatorial formula for the multiplication table of general $n$-cycles ($i.e.$ not alternating sums from successively correcting each spline entry)?
\end{question}


\end{document}